\documentclass[11pt]{article}
\usepackage{amsmath}
\usepackage{amssymb}

\newtheorem{lemma}{Lemma}[section]
\newtheorem{proposition}{Proposition}[section]
\newtheorem{theorem}{Theorem}[section]
\newtheorem{corollary}{Corollary}[section]
\newtheorem{example}{Example}[section]
\numberwithin{equation}{section}

\newcommand{\Ric}{\mathop{\rm Ric}}
\newcommand{\id}{\mathop{\rm id}}
\newcommand{\spanop}{\mathop{\rm span}}

\begin{document}

\title{Geometric Quantities in Spray Geometry and Their Applications to Finsler Geometry}
\author{}
\date{}
\maketitle

\begin{center}
{\it
This is the English translation of the paper in SCIENTIA SINICA Mathematica:  B. Li and Z. Shen,  Geometric quantities in spray geometry and their applications in Finsler geometry (in Chinese).
Sci Sin Math, 2024, 54: 116, doi: 10.1360/SSM-2023-0106
\vskip 2mm
This English translation is for readers who do not read Chinese.
It is not for separate publication or journal submission.
The original Chinese article prevails; copyright remains with the original publisher/authors.
}
\end{center}

\begin{center}
Dedicated to Professor Yibing Shen on the occasion of his 85th birthday
\end{center}

\begin{center}
Benling Li\\
School of Mathematics and Statistics, Ningbo University,\\
Ningbo 315211, China\\
E-mail: libenling@nbu.edu.cn

\vspace{0.15in}
Zhongmin Shen\\
Department of Mathematics, Indiana University,\\
Indianapolis, Indiana 46202-3216, U.S.A.\\
E-mail: zshen@iu.edu
\end{center}

\begin{abstract}
In this paper we give an overview of several developments in spray geometry and provide some further interpretations of the geometric quantities that arise naturally in this subject. We discuss, in particular, the $S$-curvature, the $\chi$-curvature, and several notions of Ricci curvature for sprays. We rederive a number of identities relating these quantities and explain their applications to locally projectively flat sprays and to projectively related sprays. We also discuss applications to Finsler geometry, with emphasis on the $\chi$-curvature of a Finsler metric and on the inverse problem of determining whether a given spray is induced by a Finsler metric.

\medskip
\noindent {\bf Keywords.} Finsler metric; spray; $S$-curvature; $\chi$-curvature; Ricci curvature.

\medskip
\noindent {\bf MSC 2020.} 53B40, 53C60.
\end{abstract}

\section{Introduction}

Consider a manifold $M$ equipped with a distinguished family of parametrized curves, called geodesics. Spray geometry is the study of the geometric properties determined by such a family of curves. The geodesics are described by a special vector field $G$ on the tangent bundle $TM$, called a spray.

Every (pseudo-)Finsler metric on a manifold induces a spray. Consequently, many questions in Finsler geometry can be investigated through the spray associated with the metric. One of the main themes of this article is to study Finsler geometric problems by means of the corresponding spray. At the same time, spray geometry is broader than Finsler geometry: in general, a spray need not be induced by a positive definite Finsler metric, or even by a Finsler metric at all. Thus sprays form a natural geometric category in their own right.

Two fundamental curvature quantities of a spray are its Riemann curvature and its Berwald curvature. The Riemann curvature of a spray is a natural extension of the Riemann curvature of a Riemannian metric. The Berwald curvature, on the other hand, is genuinely non-Riemannian: it vanishes whenever the spray is induced by a Riemannian metric. These two curvature tensors and their traces, namely the Ricci curvature and the mean Berwald curvature, play central roles in spray geometry.

Let $G$ be a spray on $M$. At each nonzero tangent vector $y\in T_xM$, the Riemann curvature may be viewed as a linear map
\[
R_y:T_xM\longrightarrow T_xM,
\qquad R_y(y)=0.
\]
As a linear transformation, $R_y$ need not be diagonalizable. A spray is said to have scalar curvature if
\[
R_y=R(y)\id-\tau_y\otimes y,
\qquad \tau_y(y)=R(y).
\]
For a spray of scalar curvature, $R_y$ is always diagonalizable. It is an interesting problem to understand the global geometry of closed manifolds carrying sprays of scalar curvature.

Spray geometry is generally nonreversible. If $c=c(t)$ is a geodesic, the reversed curve $\bar c(t)=c(t_0-t)$ need not be a geodesic. A spray is called forward complete if every geodesic $c:(a,b)\to M$ can be extended to a geodesic defined on $(a,+\infty)$. A spray on a closed manifold need not be forward complete. If the spray is forward complete, the exponential map
\[
\exp_p:T_pM\longrightarrow M
\]
is defined on the whole tangent space. Unlike the Riemannian case, however, its image need not be the whole manifold. In the absence of a metric there is, in general, no intrinsic notion of the length of a geodesic. The image of $T_pM$ under $\exp_p$ may be regarded as an observable domain. This raises a natural global question: for a closed manifold $M$, what is the smallest number of observable domains needed to cover $M$?

If $M$ is an $n$-dimensional manifold and $dV$ is a nowhere vanishing $n$-form, then a pair $(G,dV)$ determines an important quantity called the $S$-curvature. From the $S$-curvature one obtains the one-form
\[
\chi=\chi_kdx^k,
\qquad
\chi_k={1\over2}\bigl(S_{\cdot k|m}y^m-S_{|k}\bigr).
\]
A remarkable fact is that $\chi$ is independent of the choice of volume form $dV$ and therefore is an intrinsic quantity of the spray. We shall give several expressions for $\chi$ in terms of the Riemann curvature. Different expressions reveal different geometric meanings. In particular, when the spray is induced by a Finsler metric, the $\chi$-curvature can be expressed in terms of the mean Landsberg curvature. This leads to a rigidity theorem: a complete Finsler metric of negative flag curvature, with vanishing $\chi$-curvature and bounded mean Cartan torsion, must be Riemannian.

The $S$-curvature also provides a natural way to deform a spray within its projective class. Given a volume form, one can deform $G$ to a projectively related spray $\widehat G$ whose $S$-curvature vanishes. Any curvature quantity of $\widehat G$ is then a projective invariant once the volume form is fixed. If a geometric quantity of $\widehat G$ turns out to be independent of the volume form, it becomes a genuine projective invariant of $G$. The Douglas curvature and Weyl curvature arise in this way. The same construction also leads naturally to projective versions of the Riemann and Ricci curvatures.

Locally projectively flat sprays form an important class. In dimension at least three they are characterized by the simultaneous vanishing of the Douglas curvature and the Weyl curvature. Their local structure is relatively simple, but their global geometry and topology remain much less understood. One striking result is that a manifold carrying a locally projectively flat spray has vanishing real Pontrjagin classes. This suggests that the study of forward complete sprays subject to curvature restrictions should reveal interesting interactions between local spray geometry and global topology.

In the final part of this article we discuss the inverse problem of spray geometry. Every Finsler metric induces a spray, but the converse is false. Thus one would like to characterize those sprays that are metrizable by Finsler metrics. We review several results for sprays of scalar or isotropic curvature and, in particular, for locally projectively flat sprays.

\section{Sprays}

A \emph{spray} on a manifold $M$ is a vector field $G$ on $TM$ satisfying the following conditions:

\noindent (i) $G$ is $C^\infty$ on $TM\setminus\{0\}$;

\noindent (ii) in local coordinates $(x^i,y^i)$ on $TM$,
\[
G=y^i{\partial\over\partial x^i}-2G^i{\partial\over\partial y^i},
\]
where each $G^i=G^i(x,y)$ is positively homogeneous of degree two in $y$:
\[
G^i(x,\lambda y)=\lambda^2G^i(x,y),\qquad \lambda>0.
\]

The geodesics of $G$ are the projections of the integral curves of $G$. Thus, in local coordinates, a curve $c(t)=(x^i(t))$ is a geodesic if and only if
\[
{d^2x^i\over dt^2}+2G^i\left(x,{dx\over dt}\right)=0.
\]

Every Finsler metric $L$ on $M$ induces a spray whose coefficients are
\[
G^i={1\over4}g^{il}\{L_{x^my^l}y^m-L_{x^l}\},
\]
where $(g^{il})=(g_{il})^{-1}$ and
\[
g_{ij}={1\over2}L_{y^iy^j}.
\]
Here a Finsler metric $L$ is not necessarily assumed to be positive definite. It satisfies:

\noindent (1) $L$ is $C^\infty$ on $TM\setminus\{0\}$;

\noindent (2) $L(x,\lambda y)=\lambda^2L(x,y)$ for $\lambda>0$;

\noindent (3) the matrix
\[
g_{ij}(x,y)={1\over2}{\partial^2L\over\partial y^i\partial y^j}(x,y)
\]
is nonsingular.

If $g_{ij}$ is positive definite, then $L$ is called positive definite. In this case one usually writes $F=\sqrt L$.

A spray is called \emph{forward complete} if every geodesic $c:(a,b)\to M$ extends to a geodesic defined on $(a,+\infty)$.

\begin{example}[\cite{Sh1}]
On ${\bf R}^2$, consider
\[
G=y^1{\partial\over\partial x^1}+y^2{\partial\over\partial x^2}
-2G^1{\partial\over\partial y^1}-2G^2{\partial\over\partial y^2},
\]
where
\[
G^1={y^2\over2r}\sqrt{(y^1)^2+(y^2)^2},
\qquad
G^2=-{y^1\over2r}\sqrt{(y^1)^2+(y^2)^2},
\]
and $r>0$ is constant. Its geodesic equations are
\[
{d^2x^1\over dt^2}+{1\over2r}
\sqrt{\left({dx^1\over dt}\right)^2+\left({dx^2\over dt}\right)^2}
{dx^2\over dt}=0,
\]
\[
{d^2x^2\over dt^2}-{1\over2r}
\sqrt{\left({dx^1\over dt}\right)^2+\left({dx^2\over dt}\right)^2}
{dx^1\over dt}=0.
\]
Their general solution can be written as
\[
x^1=r\cos\left({\lambda\over r}t+\theta\right)-r\cos\theta+x^1(0),
\]
\[
x^2=r\sin\left({\lambda\over r}t+\theta\right)-r\sin\theta+x^2(0).
\]
Hence $G$ is forward complete.
\end{example}

\begin{example}
Let
\[
G=y^i{\partial\over\partial x^i}-2G^i{\partial\over\partial y^i}
\]
be a spray on ${\bf R}^n$ with
\[
G^i=-{x^i\over\sum_{j=1}^n(x^j)^2}(y^i)^2.
\]
This spray is not forward complete.

Let $\lambda>0$ be a constant and consider
\[
\phi:{\bf R}^n\setminus\{0\}\longrightarrow {\bf R}^n\setminus\{0\},
\qquad x\longmapsto\lambda x.
\]
The map $\phi$ preserves geodesics. Let
\[
\Gamma=\{\id,\phi^m:m\in{\bf Z}\}
\]
act on ${\bf R}^n\setminus\{0\}$. Then
\[
M=({\bf R}^n\setminus\{0\})/\Gamma
\]
is closed, and the spray induced by $G$ on $M$ is not forward complete.
\end{example}

\section{Riemann Curvature and Berwald Curvature}

Let
\[
G=y^i{\partial\over\partial x^i}-2G^i{\partial\over\partial y^i}
\]
be a spray on $M$, and let
\[
VTM=\spanop\left\{{\partial\over\partial y^i}\right\}.
\]
The spray determines a horizontal distribution
\[
HTM=\spanop\left\{{\delta\over\delta x^i}\right\},
\qquad
{\delta\over\delta x^i}={\partial\over\partial x^i}-N^j_i{\partial\over\partial y^j},
\]
where
\[
N^i_j={\partial G^i\over\partial y^j}.
\]
On $TM\setminus\{0\}$ we have the vector fields
\[
Y=y^i{\partial\over\partial y^i},
\qquad
G=y^i{\delta\over\delta x^i}.
\]
Let $\{dx^i,\delta y^i\}$ denote the coframe dual to
\[
\left\{{\delta\over\delta x^i},{\partial\over\partial y^i}\right\},
\]
so that
\[
\delta y^i=dy^i+N^i_jdx^j.
\]
Set
\[
\omega^i=dx^i,\qquad \omega^{n+i}=\delta y^i,
\qquad \omega^i_j=\Gamma^i_{jk}dx^k,
\]
where
\[
\Gamma^i_{jk}={\partial^2G^i\over\partial y^j\partial y^k}.
\]
Also set
\[
e_i={\delta\over\delta x^i},
\qquad e_{n+i}={\partial\over\partial y^i}.
\]

\begin{lemma}[\cite{Sh1}]
The structure equations satisfy
\[
d\omega^i=\omega^j\wedge\omega^i_j.
\]
\end{lemma}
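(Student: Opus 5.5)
Since $\omega^i=dx^i$ is exact, the left-hand side is $d\omega^i=d(dx^i)=0$. So the whole content of the lemma is the vanishing of the right-hand side, $\omega^j\wedge\omega^i_j=0$. The plan is to expand this using the definition $\omega^i_j=\Gamma^i_{jk}dx^k$ and then invoke the symmetry of the coefficients $\Gamma^i_{jk}$ in the lower indices.

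First I substitute the definitions. This gives
\[
\omega^j\wedge\omega^i_j=\Gamma^i_{jk}\,dx^j\wedge dx^k .
\]
Next I use the symmetry. By definition $\Gamma^i_{jk}=\partial^2G^i/\partial y^j\partial y^k$, and $G^i$ is $C^\infty$ on $TM\setminus\{0\}$. Mixed partial derivatives commute, so $\Gamma^i_{jk}=\Gamma^i_{kj}$.

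The sum $\Gamma^i_{jk}\,dx^j\wedge dx^k$ pairs a tensor symmetric in $(j,k)$ with the antisymmetric $dx^j\wedge dx^k$. Exchanging the dummy indices $j,k$ shows that it equals its own negative, so it vanishes. Hence $\omega^j\wedge\omega^i_j=0=d\omega^i$, which is the claimed identity. This is the first structure equation, expressing the torsion-freeness of the Berwald connection.

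There is no real obstacle here. The only point to check is that the identity is stated on $TM\setminus\{0\}$, where $G^i$ is smooth, so that $\Gamma^i_{jk}$ is defined and symmetric there. At $y=0$ the coefficients $G^i$ need not be twice differentiable, so one restricts to the slit tangent bundle throughout, as the paper does when introducing the frame.
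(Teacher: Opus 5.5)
Your proof is correct: both sides vanish identically, since $d(dx^i)=0$, and $\Gamma^i_{jk}\,dx^j\wedge dx^k=0$ because $\Gamma^i_{jk}=\Gamma^i_{kj}$ on $TM\setminus\{0\}$. The paper gives no proof of its own and simply cites the reference; your direct verification is the standard one, and it expresses the torsion-freeness of the Berwald connection.
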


Let
\[
\Omega^{\ i}_j=d\omega^i_j-\omega^k_j\wedge\omega^i_k.
\]
The curvature tensor can be written as
\[
\Omega=\Omega^{\ i}_j\,\omega^j\otimes e_i.
\]
Write
\[
\Omega^{\ i}_j={1\over2}R^{\ i}_{j\ kl}\omega^k\wedge\omega^l
-{1\over2}B^{\ i}_{j\ kl}\omega^k\wedge\omega^{n+l},
\]
where $R^{\ i}_{j\ kl}+R^{\ i}_{j\ lk}=0$. Then
\[
B^{\ i}_{j\ kl}={\partial\Gamma^i_{jk}\over\partial y^l}
={\partial^3G^i\over\partial y^j\partial y^k\partial y^l}.
\]
The Riemann curvature coefficients
\[
R^i_{\ k}=y^jR^{\ i}_{j\ kl}y^l
\]
are given by
\[
R^i_{\ k}=2{\partial G^i\over\partial x^k}
-y^m{\partial^2G^i\over\partial y^k\partial x^m}
+2G^m{\partial^2G^i\over\partial y^m\partial y^k}
-{\partial G^m\over\partial y^k}{\partial G^i\over\partial y^m}.
\]

If $B=0$, then $G$ is called a \emph{Berwald spray}, or equivalently an affine spray. In this case
\[
G^i={1\over2}\Gamma^i_{jk}(x)y^jy^k
\]
is quadratic in $y$ at each $x\in M$. For an affine spray, $R^{\ i}_{j\ kl}=R^{\ i}_{j\ kl}(x)$ is independent of $y$.

\begin{lemma}[\cite{Sh1}]
\begin{align}\label{R_ijkl}
R^{\ i}_{j\ kl}={1\over3}
\left\{{\partial^2R^i_{\ k}\over\partial y^j\partial y^l}
-{\partial^2R^i_{\ l}\over\partial y^j\partial y^k}\right\}.
\end{align}
\end{lemma}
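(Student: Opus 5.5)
We will prove the identity by computing $R^{\ i}_{j\ kl}$ in terms of $G^i$ and then deriving two properties of it. Together these pin it down in terms of $R^i_{\ k}$.

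\emph{Explicit formula.} First we expand $d\omega^i_j-\omega^k_j\wedge\omega^i_k$ in the coframe $\{\omega^k,\omega^{n+k}\}$. We write $d\Gamma^i_{jk}=\frac{\delta\Gamma^i_{jk}}{\delta x^l}dx^l+\frac{\partial\Gamma^i_{jk}}{\partial y^l}\delta y^l$. The $dx\wedge dx$ part then gives
\[
R^{\ i}_{j\ kl}=\frac{\delta\Gamma^i_{jl}}{\delta x^k}-\frac{\delta\Gamma^i_{jk}}{\delta x^l}+\Gamma^m_{jl}\Gamma^i_{mk}-\Gamma^m_{jk}\Gamma^i_{ml}.
\]
The normalization is to be checked against the stated formula for $R^i_{\ k}$, using $\Gamma^i_{jk}y^j=N^i_k$ and the Euler relations. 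From this formula, and by homogeneity of $G^i$ in $y$, we get two facts:
\begin{itemize}
\item[(a)] $R^{\ i}_{j\ kl}$ is positively homogeneous of degree $0$ in $y$.
\item[(b)] The $y^j$-contraction $R^{\ i}_{j\ kl}y^j=:R^i_{\ kl}$ satisfies $R^i_{\ k}=R^i_{\ kl}y^l$.
\end{itemize}

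\emph{Express $R^i_{\ kl}$ via $R^i_{\ k}$.} The key intermediate identity is
\[
R^i_{\ kl}=\frac13\Big(\frac{\partial R^i_{\ k}}{\partial y^l}-\frac{\partial R^i_{\ l}}{\partial y^k}\Big).
\]
To prove it we use two ingredients. The first is the Bianchi-type identity obtained from $d\Omega=\dots$, or directly from the explicit formula: $\frac{\partial R^i_{\ kl}}{\partial y^m}$ summed cyclically over $(k,l,m)$ vanishes. Equivalently, $R^i_{\ kl}=\frac{\delta N^i_l}{\delta x^k}-\frac{\delta N^i_k}{\delta x^l}$, since $R^i_{\ kl}$ is the curvature of the nonlinear connection, and this form makes the cyclic identity transparent. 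The second ingredient is that $R^i_{\ kl}$ is homogeneous of degree $1$, so Euler gives $y^m\partial_{y^m}R^i_{\ kl}=R^i_{\ kl}$.

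Now we differentiate $R^i_{\ k}=R^i_{\ km}y^m$ with respect to $y^l$, which gives $\partial_{y^l}R^i_{\ k}=R^i_{\ kl}+y^m\partial_{y^l}R^i_{\ km}$. We antisymmetrize in $(k,l)$ and use the cyclic identity to replace $y^m(\partial_{y^l}R^i_{\ km}-\partial_{y^k}R^i_{\ lm})$ by $y^m\partial_{y^m}R^i_{\ kl}=R^i_{\ kl}$. The result is $3R^i_{\ kl}$, which is the identity above.

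\emph{Final step.} We show $R^{\ i}_{j\ kl}=\frac{\partial R^i_{\ kl}}{\partial y^j}$. The cleanest route is the formula $R^i_{\ kl}=\frac{\delta N^i_l}{\delta x^k}-\frac{\delta N^i_k}{\delta x^l}$: differentiating it in $y^j$ and using $\partial_{y^j}N^i_l=\Gamma^i_{jl}$ together with $[\partial_{y^j},\delta_{x^k}]=-\Gamma^m_{jk}\partial_{y^m}$ reproduces the explicit formula. Applying $\partial_{y^j}$ to the intermediate identity then yields (\ref{R_ijkl}).

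The main obstacle is bookkeeping. Sign conventions in $\Omega$, and the order $kl$ versus $lk$ in the definition of $R^{\ i}_{j\ kl}$, must be matched carefully with the given formula for $R^i_{\ k}$, since a sign slip flips the final identity.
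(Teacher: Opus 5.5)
The paper gives no proof of this lemma; it is quoted from \cite{Sh1}, so there is no argument of the authors' to compare with. Your proof is correct and follows the standard route.

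\begin{itemize}
\item Your explicit formula for $R^{\ i}_{j\ kl}$, read off from $\Omega^{\ i}_j=d\omega^i_j-\omega^k_j\wedge\omega^i_k$, is right.
\item Contracting it with $y^j$ uses $\frac{\delta y^j}{\delta x^k}=-N^j_k$ and $y^j\Gamma^i_{jl}=N^i_l$. This gives exactly $R^i_{\ kl}=\frac{\delta N^i_l}{\delta x^k}-\frac{\delta N^i_k}{\delta x^l}$.
\item Contracting once more with $y^l$ reproduces the paper's displayed formula for $R^i_{\ k}$ term by term. So the sign and ordering worry you flag does not arise.
\item The cyclic identity $\sum_{\rm cyc}\partial_{y^m}R^i_{\ kl}=0$ holds. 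It follows directly from the $\delta N$ formula and the symmetry of $\Gamma^i_{jk}$.
\item The antisymmetrization plus the Euler step correctly yields $3R^i_{\ kl}=R^i_{\ k\cdot l}-R^i_{\ l\cdot k}$.
\item The commutator $[\partial_{y^j},\delta/\delta x^k]=-\Gamma^m_{jk}\partial_{y^m}$ gives $\partial_{y^j}R^i_{\ kl}=R^{\ i}_{j\ kl}$. This completes the argument.
\end{itemize}

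Three small remarks.

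\begin{itemize}
\item The cyclic identity is really the first Bianchi identity $\omega^j\wedge\Omega^{\ i}_j=0$. It comes from applying $d$ to the structure equation $d\omega^i=\omega^j\wedge\omega^i_j$, not from $d\Omega$. Your direct verification makes this moot.
\item You can avoid separately asserting $y^jR^{\ i}_{j\ kl}=R^i_{\ kl}$. Define $R^i_{\ kl}$ by the $\delta N$ formula, prove $\partial_{y^j}R^i_{\ kl}=R^{\ i}_{j\ kl}$ first, and then get the contraction from Euler's relation, since $R^i_{\ kl}$ has degree one.
\item Property (a) is never used.
\end{itemize}
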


The Riemann curvature $R^i_{\ k}\omega^k\otimes e_i$ may be viewed as a family of linear maps
\[
R_y:T_xM\longrightarrow T_xM,
\]
where
\[
R_y\left({\partial\over\partial x^k}\Big|_x\right)
=R^i_{\ k}(x,y){\partial\over\partial x^i}\Big|_x.
\]

\section{$S$-Curvature and $\chi$-Curvature}

Let
\[
G=y^i{\partial\over\partial x^i}-2G^i{\partial\over\partial y^i}
\]
be a spray on an $n$-dimensional manifold $M$, and let
\[
dv=\sigma(x)dx^1\cdots dx^n
\]
be a volume form. The $S$-curvature of $(G,dv)$ is
\[
S={\partial G^m\over\partial y^m}-y^m{\partial\over\partial x^m}\ln\sigma.
\]
This quantity plays an important role in spray geometry.

Let $\phi_t:TM\to TM$ denote the geodesic flow of $G$. If $d\hat{v}$ is the induced volume form on $TM$,
\begin{align*}
\begin{split}
 d\hat{v} & = \sigma^2(x) dx^1 \cdot\cdot\cdot dx^n dy^1 \cdot\cdot\cdot dy^n \\
          & = \sigma^2(x) dx^1 \cdot\cdot\cdot dx^n \delta y^1 \cdot\cdot\cdot \delta y^n.
\end{split}
\end{align*}
then
\[
{d\over dt}\left[(\phi_t)^*d\hat{v}\right]_{t=0}
=-2Sd\hat{v}.
\]
Thus, if $S=0$, then $(\phi_t)^* d\hat{v}= d\hat{v}.$

The $\chi$-curvature $\chi=\chi_kdx^k$ is defined by
\begin{align}\label{chi}
\chi_k={1\over2}\left(S_{\cdot k|m}y^m-S_{|k}\right).
\end{align}
In local coordinates,
\begin{align}\label{chi_local}
\chi_k={1\over2}\left[
 y^m{\partial^2\Pi\over\partial x^m\partial y^k}
-2G^m{\partial^2\Pi\over\partial y^m\partial y^k}
-{\partial\Pi\over\partial x^k}\right],
\end{align}
where
$
\Pi={\partial G^m\over\partial y^m}.
$
Hence $\chi_k$ is independent of the volume form $dv$.

\section{Identities Involving the $\chi$-Curvature}

In this section we discuss several expressions for the $\chi$-curvature. This quantity appears naturally in a number of important geometric identities.

The mean Berwald curvature is defined by
\[
E_{ij}={1\over2}B^{\ m}_{i\ mj}.
\]
Since
\[
E_{ij}={1\over2}S_{\cdot i\cdot j},
\]
we define the $H$-curvature by
\[
H_{ij}=E_{ij|m}y^m.
\]
Thus $H=H_{ij}dx^i\otimes dx^j$ is the covariant derivative of $E$ along geodesics.

\begin{lemma}[\cite{Sh2}]
The $H$-curvature is determined by the $\chi$-curvature according to
\begin{align}\label{H_ij}
H_{ij}={1\over2}(\chi_{i\cdot j}+\chi_{j\cdot i}).
\end{align}
\end{lemma}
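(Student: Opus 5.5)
We prove (\ref{H_ij}) by computing both sides in terms of $S$ and its Berwald covariant derivatives, and then reducing the difference to a Ricci-type commutation formula for vertical and horizontal derivatives. The definitions give
\[
H_{ij}={1\over2}S_{\cdot i\cdot j|m}y^m ,
\qquad
\chi_{i\cdot j}={1\over2}\bigl(S_{\cdot i|m}y^m-S_{|i}\bigr)_{\cdot j},
\]
so the whole task is to move the vertical index $\cdot j$ past the horizontal derivative $|m$.

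\textbf{Step 1: differentiate $\chi_i$ vertically.} Using $y^m_{\ \cdot j}=\delta^m_j$,
\[
2\chi_{i\cdot j}=S_{\cdot i|m\cdot j}y^m+S_{\cdot i|j}-S_{|i\cdot j}.
\]

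\textbf{Step 2: commutation formula.} For the Berwald connection the commutation of a vertical derivative past a horizontal one is governed by the Berwald curvature. For a function $f$ homogeneous in $y$,
\[
f_{|k\cdot j}=f_{\cdot j|k}-f_{\cdot p}B^{\ p}_{k\ jq}\cdots .
\]
More precisely, $\delta/\delta x^k$ and $\partial/\partial y^j$ commute up to $\Gamma^p_{jk}\partial/\partial y^p$, and this term is absorbed into the connection. The leftover terms then involve $B^{\ p}_{j\ kl}$ contracted with $y$.

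Since $B^{\ p}_{j\ kl}y^l=0$ by homogeneity, contractions with $y^m$ kill many terms. I expect the following:
\begin{itemize}
\item $S_{|i\cdot j}=S_{\cdot j|i}$ exactly, since $S$ is a scalar;
\item $S_{\cdot i|m\cdot j}y^m=S_{\cdot i\cdot j|m}y^m-S_{\cdot p}B^{\ p}_{i\ jm}y^m=S_{\cdot i\cdot j|m}y^m$.
\end{itemize}
Verifying these two identities carefully in local coordinates is the main obstacle. It requires writing $S_{\cdot i|m}=\delta_m S_{\cdot i}-S_{\cdot p}\Gamma^p_{im}$ and differentiating in $y^j$. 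The term $-S_{\cdot p}\,\partial_{y^j}\Gamma^p_{im}\,y^m=-S_{\cdot p}B^{\ p}_{i\ mj}y^m$ vanishes by homogeneity, while the remaining terms reassemble into $(S_{\cdot i\cdot j})_{|m}$ using $\partial_{y^j}N^p_m=\Gamma^p_{jm}$.

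\textbf{Step 3: combine.} Substituting into Step 1 gives
\[
2\chi_{i\cdot j}=S_{\cdot i\cdot j|m}y^m+S_{\cdot i|j}-S_{\cdot j|i}
=4H_{ij}+\bigl(S_{\cdot i|j}-S_{\cdot j|i}\bigr).
\]
The bracket is antisymmetric in $i,j$, because $S_{\cdot i|j}$ has no reason to be symmetric. Symmetrizing in $i,j$ removes it, which yields
\[
\chi_{i\cdot j}+\chi_{j\cdot i}=4H_{ij}\cdot{1\over2}\cdot...
\]
The normalization must be checked here. From the expression above, $\chi_{i\cdot j}+\chi_{j\cdot i}=2H_{ij}$, since $S_{\cdot i\cdot j|m}y^m=2E_{ij|m}y^m=2H_{ij}$. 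This gives (\ref{H_ij}).

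As a cross-check independent of the covariant formalism, one can differentiate the local formula (\ref{chi_local}) in $y^j$ and symmetrize. The result should match $H_{ij}$ computed directly from $E_{ij}={1\over2}\Pi_{y^iy^j}$ and the definition of the horizontal covariant derivative.
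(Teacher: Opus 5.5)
Your proposal is correct and is essentially the paper's proof: differentiate $\chi_i$ vertically, use $S_{|i\cdot j}=S_{\cdot j|i}$ and $S_{\cdot i|m\cdot j}y^m=S_{\cdot i\cdot j|m}y^m$, then symmetrize in $i,j$ to kill the antisymmetric term $S_{\cdot i|j}-S_{\cdot j|i}$ (the second identity follows from $B^{\ p}_{i\ mj}y^m=0$, a step the paper uses without comment). In the final write-up, replace $4H_{ij}$ in Step 3 by $2H_{ij}$, as you noticed yourself, and drop the garbled intermediate display, so the conclusion reads directly $\chi_{i\cdot j}+\chi_{j\cdot i}=2H_{ij}$.
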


\noindent {\bf Proof.}
By the Ricci identity,
\[
S_{\cdot i|j}=S_{|j\cdot i}.
\]
Therefore
\begin{align*}
\begin{split}
\chi_{i \cdot j} & = \frac{1}{2}[ S_{\cdot i |m \cdot j} y^m + S_{ \cdot i| j} -S_{| i \cdot j} ]  = \frac{1}{2}[ S_{\cdot i  \cdot j |m} y^m + S_{ \cdot i| j} -S_{| i \cdot j} ] = E_{ij|m}y^m + \frac{1}{2}(S_{ \cdot i| j} -S_{| i \cdot j}),
\end{split}
\end{align*}
\begin{align*}
\begin{split}
\chi_{i \cdot j} + \chi_{j \cdot i}& = 2E_{ij|m}y^m +  \frac{1}{2}(S_{ \cdot i| j} -S_{| i \cdot j}) = 2E_{ij|m}y^m.
\end{split}
\end{align*}
Adding the corresponding formula with $i$ and $j$ interchanged yields (\ref{H_ij}). Contracting (\ref{H_ij}) with $y^i$, we obtain
\[
H_{ij}y^i  = \frac{1}{2}( \chi_{i \cdot j} y^i + \chi_{ j\cdot i} y^i )  = \frac{1}{2}[( \chi_i y^i )_{\cdot j} - \chi_j]  = -\frac{1}{2}\chi_j.
\]
and hence
\[
\chi_j=-2H_{ij}y^i=-2E_{ij|m}y^my^i
=-S_{\cdot i\cdot j|m}y^my^i.
\]
\hfill {\it Q.E.D.}

\begin{lemma}[\cite{Sh4}]\label{chi_k_R}
The $\chi$-curvature can be expressed in terms of the Riemann curvature by
\[
\chi_k=-{1\over2}R^{\ m}_{m\ kl}y^l.
\]
\end{lemma}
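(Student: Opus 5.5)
We want to prove $\chi_k=-\frac12 R^{\ m}_{m\ kl}y^l$. Since the local formula (\ref{chi_local}) shows that $\chi_k$ depends only on $\Pi=\partial G^m/\partial y^m$, the plan is a direct coordinate computation. We will express $R^{\ m}_{m\ kl}y^l$ through the Riemann coefficients $R^i_{\ k}$ by means of (\ref{R_ijkl}). Then we will substitute the explicit formula for $R^i_{\ k}$ in terms of $G^i$, and compare the result with (\ref{chi_local}).

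\emph{Step 1: reduce to traces of $R^i_{\ k}$.} Contract (\ref{R_ijkl}) over $i=j$ and with $y^l$:
\[
R^{\ m}_{m\ kl}y^l={1\over3}\Bigl\{{\partial^2R^m_{\ k}\over\partial y^m\partial y^l}y^l-{\partial^2R^m_{\ l}\over\partial y^m\partial y^k}y^l\Bigr\}.
\]
By Euler's theorem, $R^m_{\ k}$ is homogeneous of degree two in $y$, so $\partial_{y^m}R^m_{\ k}$ has degree one and the first term equals $\partial_{y^m}R^m_{\ k}$. For the second term we pull $y^l$ inside. Using $R^m_{\ l}y^l=0$ (since $R_y(y)=0$), we get $\partial_{y^m}(R^m_{\ l}y^l)=0$, hence $\partial_{y^m}R^m_{\ l}\,y^l=-R^m_{\ m}=-\Ric$. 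Differentiating in $y^k$ gives $\partial_{y^k}\partial_{y^m}R^m_{\ l}\,y^l=-\Ric_{\cdot k}-\partial_{y^m}R^m_{\ k}$. Altogether,
\[
R^{\ m}_{m\ kl}y^l={1\over3}\bigl\{2\,\partial_{y^m}R^m_{\ k}+\Ric_{\cdot k}\bigr\}.
\]

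\emph{Step 2: compute $\partial_{y^m}R^m_{\ k}$ and $\Ric_{\cdot k}$ from the explicit formula.} Differentiate
\[
R^i_{\ k}=2G^i_{x^k}-y^jG^i_{y^kx^j}+2G^jG^i_{y^jy^k}-G^j_{y^k}G^i_{y^j}
\]
with respect to $y^i$ and sum; separately, take the trace $i=k$ and differentiate with respect to $y^k$. Write $\Pi=G^m_{y^m}$ and use homogeneity identities such as $y^jG^i_{y^j}=2G^i$ and $y^jG^i_{y^jy^k}=G^i_{y^k}$. The terms that are linear in the second $y$-derivatives of $\Pi$ and in $\Pi_{x}$ should assemble into the combination
\[
y^m\Pi_{x^my^k}-2G^m\Pi_{y^my^k}-\Pi_{x^k}
\]
appearing in (\ref{chi_local}). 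The quadratic terms, of the form $G^j_{y^m}G^m_{y^jy^k}$, should cancel between $2\partial_{y^m}R^m_{\ k}$ and $\Ric_{\cdot k}$.

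\emph{Main obstacle.} The delicate part is this bookkeeping. Each of $\partial_{y^m}R^m_{\ k}$ and $\Ric_{\cdot k}$ separately contains the terms $G^i_{x^ky^i}$, $y^jG^i_{x^jy^iy^k}$, $G^jG^i_{y^jy^ky^i}$ and quadratic terms, each with its own coefficients. One has to check that the weighted combination $2\partial_{y^m}R^m_{\ k}+\Ric_{\cdot k}$ equals exactly $-3$ times the bracket in (\ref{chi_local}) divided by $2$; that is, we need $R^{\ m}_{m\ kl}y^l=-2\chi_k$. I would first handle the quadratic terms, checking that they cancel using the symmetry of $G^i_{y^jy^k}$. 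Then I would match the three linear terms coefficient by coefficient.

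As a fallback to reduce computation, there is a second route. We can use $\chi_j=-S_{\cdot i\cdot j|m}y^my^i$ from the previous lemma together with the Ricci identity for horizontal and vertical derivatives, which introduces $R^{\ m}_{m\ kl}$ directly. This works provided sign conventions are tracked carefully.
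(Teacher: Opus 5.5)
Your proposal is correct and is essentially the paper's argument. Step 1 is exactly the paper's contraction $R^{\ m}_{m\ kl}y^l=\frac13(2R^m_{\ k\cdot m}+R^m_{\ m\cdot k})$, and Step 2 supplies a coordinate verification of $\chi_k=-\frac16(2R^m_{\ k\cdot m}+R^m_{\ m\cdot k})$, which the paper takes as known; the bookkeeping closes as you predict, since the $\partial_{x^m}N^m_k$, $N^m_k\Pi_{y^m}$ and $N^j_mN^m_{j\cdot k}$ terms all cancel and leave $2R^m_{\ k\cdot m}+\Ric_{\cdot k}=-3\,[y^m\Pi_{x^my^k}-2G^m\Pi_{y^my^k}-\Pi_{x^k}]=-6\chi_k$. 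One slip: this is $-3$ times the bracket, not $-3$ times the bracket divided by $2$ as you wrote, though your stated target $R^{\ m}_{m\ kl}y^l=-2\chi_k$ is the correct one.
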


\noindent {\bf Proof.}
From (\ref{R_ijkl}),
\[
R^{\ i}_{j\ kl}y^l={1\over3}(2R^i_{\ k\cdot j}+R^i_{\ j\cdot k}).
\]
Hence
\[
\chi_k=-{1\over6}(2R^m_{\ k\cdot m}+R^m_{\ m\cdot k})
=-{1\over2}R^{\ m}_{m\ kl}y^l.
\]
\hfill {\it Q.E.D.}

Define
\[
T^i_{\ k}:=R^i_{\ k}-\left(R\delta^i_k-{1\over2}R_{\cdot k}y^i\right),
\qquad
R={1\over n-1}R^m_{\ m}.
\]

\begin{lemma}[\cite{Sh4}]
The $\chi$-curvature is given by
\[
\chi_k=-{1\over3}T^m_{\ k\cdot m}.
\]
\end{lemma}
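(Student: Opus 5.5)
We propose to compute $T^m_{\ k\cdot m}$ directly from the definition of $T^i_{\ k}$ and compare with the expression obtained in the proof of Lemma \ref{chi_k_R}, namely
\[
\chi_k=-{1\over6}\bigl(2R^m_{\ k\cdot m}+R^m_{\ m\cdot k}\bigr).
\]
Since $R=\frac{1}{n-1}R^m_{\ m}$, this identity reads
\[
\chi_k=-{1\over3}R^m_{\ k\cdot m}-{n-1\over6}R_{\cdot k}.
\]
The goal is therefore to show that $T^m_{\ k\cdot m}=R^m_{\ k\cdot m}+\frac{n-1}{2}R_{\cdot k}$.

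First, we differentiate $T^i_{\ k}=R^i_{\ k}-R\delta^i_k+\frac12 R_{\cdot k}y^i$ with respect to $y^m$ and contract $i=m$. The term $R\delta^m_k$ gives $R_{\cdot k}$. The term $\frac12R_{\cdot k}y^m$ gives
\[
\tfrac12\bigl(R_{\cdot k\cdot m}y^m+nR_{\cdot k}\bigr).
\]
Second, we use homogeneity. $R^i_{\ k}$ is positively homogeneous of degree two in $y$, so $R$ is too, and hence $R_{\cdot k}$ is homogeneous of degree one. Euler's theorem then gives $R_{\cdot k\cdot m}y^m=R_{\cdot k}$. Collecting terms,
\[
T^m_{\ k\cdot m}=R^m_{\ k\cdot m}-R_{\cdot k}+\tfrac12(1+n)R_{\cdot k}=R^m_{\ k\cdot m}+\tfrac{n-1}{2}R_{\cdot k}.
\]
Multiplying by $-\frac13$ gives exactly $\chi_k$, which proves the lemma.

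The only delicate point is the first formula for $\chi_k$, which is the one used in the proof of Lemma \ref{chi_k_R}. That step identifies $-\frac12R^{\ m}_{m\ kl}y^l$, obtained by contracting (\ref{R_ijkl}) with $y^l$ and then setting $j=i=m$, with the expression in $R^m_{\ k\cdot m}$ and $R^m_{\ m\cdot k}$. The contraction itself relies on Euler's theorem and on $R^i_{\ k}y^k=0$. We take both that contraction and Lemma \ref{chi_k_R} as given, so the remaining work is the bookkeeping above, and the main thing to check is the coefficient of $R_{\cdot k}$.
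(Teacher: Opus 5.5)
Your proposal is correct and is essentially the paper's own proof. You differentiate $T^i_{\ k}$ in $y^m$, contract, and use $R_{\cdot k\cdot m}y^m=R_{\cdot k}$ to get $T^m_{\ k\cdot m}=R^m_{\ k\cdot m}+\frac{n-1}{2}R_{\cdot k}=\frac12(2R^m_{\ k\cdot m}+R^m_{\ m\cdot k})$, then compare with $\chi_k=-\frac16(2R^m_{\ k\cdot m}+R^m_{\ m\cdot k})$ from the proof of Lemma \ref{chi_k_R}, exactly as the paper does.
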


\noindent {\bf Proof.}
By definition,
\[
\begin{array}{rcl}
T^m_{\ k\cdot m}
&=&R^m_{\ k\cdot m}
-\left(R_{\cdot m}\delta^m_k
-{1\over2}R_{\cdot k\cdot m}y^m
-{1\over2}R_{\cdot k}\delta^m_m\right)\\[3pt]
&=&R^m_{\ k\cdot m}+{n-1\over2}R_{\cdot k}\\[3pt]
&=&{1\over2}(2R^m_{\ k\cdot m}+R^m_{\ m\cdot k})\\[3pt]
&=&-3\chi_k.
\end{array}
\]
\hfill {\it Q.E.D.}

In particular, if $T^i_{\ k}=0$, namely if $G$ has isotropic curvature, then $\chi_k=0$.
The relationship between the Weyl tensor, the $\chi$-curvature, and the tensor $T$ is given by the following lemma.
\begin{lemma}[\cite{Sh4}]\label{lemWeyl}
The Weyl curvature is given by
\begin{align}\label{W_ik}
W^i_{\ k}=T^i_{\ k}+{3\over n+1}\chi_k y^i.
\end{align}
\end{lemma}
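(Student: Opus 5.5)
The plan is a direct algebraic comparison of $W^i_{\ k}$ with $T^i_{\ k}$, followed by an application of the preceding lemma $\chi_k=-{1\over3}T^m_{\ k\cdot m}$. I will use the standard (Douglas) definition of the Weyl curvature of a spray. Set
\[
A^i_{\ k}:=R^i_{\ k}-R\,\delta^i_k,\qquad R={1\over n-1}R^m_{\ m},
\]
and
\[
W^i_{\ k}:=A^i_{\ k}-{1\over n+1}A^m_{\ k\cdot m}\,y^i .
\]
This is the projectively invariant part of the Riemann curvature. The argument needs no new geometric input beyond this definition.

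The first step is to compute the trace $A^m_{\ k\cdot m}$. Differentiating $A^m_{\ k}=R^m_{\ k}-R\,\delta^m_k$ in $y^m$ and contracting gives
\[
A^m_{\ k\cdot m}=R^m_{\ k\cdot m}-R_{\cdot k}.
\]
Substituting into the definition gives
\[
W^i_{\ k}=R^i_{\ k}-R\,\delta^i_k-{1\over n+1}\bigl(R^m_{\ k\cdot m}-R_{\cdot k}\bigr)y^i .
\]

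The second step is to subtract $T^i_{\ k}=R^i_{\ k}-R\,\delta^i_k+{1\over2}R_{\cdot k}y^i$. The terms $R^i_{\ k}-R\delta^i_k$ cancel, leaving a multiple of $y^i$:
\[
W^i_{\ k}-T^i_{\ k}
=-{1\over n+1}\Bigl[R^m_{\ k\cdot m}-R_{\cdot k}+{n+1\over2}R_{\cdot k}\Bigr]y^i
=-{1\over n+1}\Bigl[R^m_{\ k\cdot m}+{n-1\over2}R_{\cdot k}\Bigr]y^i .
\]

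The third step identifies the bracket. By the computation in the proof of the previous lemma, $T^m_{\ k\cdot m}=R^m_{\ k\cdot m}+{n-1\over2}R_{\cdot k}$. That lemma also gives $T^m_{\ k\cdot m}=-3\chi_k$. Hence
\[
W^i_{\ k}-T^i_{\ k}={3\over n+1}\chi_k\,y^i,
\]
which is (\ref{W_ik}).

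The only delicate point is fixing the convention for $W$, in particular the normalization ${1\over n+1}$ and the choice of trace $A^m_{\ k\cdot m}$. If the paper instead defines $W$ through the $S$-curvature-free deformation $\widehat G$, I would first check that that definition reduces to the formula above. The remaining steps are routine index manipulations.
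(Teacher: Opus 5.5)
Your proposal is correct and follows essentially the same route as the paper. You use the same standard formula $W^i_{\ k}=A^i_{\ k}-\frac{1}{n+1}A^m_{\ k\cdot m}y^i$, subtract $T^i_{\ k}$, and identify the leftover multiple of $y^i$ with $\frac{3}{n+1}\chi_k$; the only cosmetic difference is that you invoke $T^m_{\ k\cdot m}=-3\chi_k$, whereas the paper writes the bracket as $\frac12(2R^m_{\ k\cdot m}+R^m_{\ m\cdot k})$ and applies $\chi_k=-\frac16(2R^m_{\ k\cdot m}+R^m_{\ m\cdot k})$ directly, which is the same computation.
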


\noindent {\bf Proof.}
Using the standard formula
\[
W^i_{\ k}=A^i_{\ k}-{1\over n+1}A^m_{\ k\cdot m}y^i,
\qquad
A^i_{\ k}=R^i_{\ k}-R\delta^i_k,
\]
we obtain
\[
W^i_{\ k}
=R^i_{\ k}-\left(R\delta^i_k-{1\over2}R_{\cdot k}y^i\right)
-{1\over2(n+1)}(2R^m_{\ k\cdot m}+R^m_{\ m\cdot k})y^i = T^i_{\ k} + \frac{3}{n+1} \chi_k y^i,
\]
which is precisely (\ref{W_ik}).
\hfill {\it Q.E.D.}

The condition $W^i_{\ k}=0$ is equivalent to the condition that $G$ has scalar curvature; see Section 9.

\section{Ricci Curvature and Ricci Curvature Tensors}

It is natural to consider Ricci curvature for a spray. However, there are several possible ways to take traces of the Riemann curvature.

Recall that the Riemann curvature may be viewed as a family of linear maps $R_y:T_xM\to T_xM$. The trace of $R_y$ is called the Ricci curvature:
\[
\Ric=R^m_{\ m}.
\]
Thus $\Ric$ is a scalar function on $TM$, rather than a tensor on $TM\setminus\{0\}$.

On the other hand, from (\ref{R_ijkl}), the Riemann curvature tensor $R^{\ i}_{j\ kl}$ of the Berwald connection is closely related to $R^i_{\ k}$. One may take its trace by setting
\[
{\Ric}_{jl}:=R^{\ m}_{j\ ml}.
\]
Then
\[
{\Ric}={\Ric}_{jl}y^jy^l.
\]
Using (\ref{R_ijkl}), one finds
\begin{align*}
\begin{split}
{\Ric}_{jl} & =  \frac{1}{3}\Big \{  R^m_{\ m \cdot l \cdot j} - R^m_{\ l \cdot m \cdot j}\Big \}\nonumber\\
& =  \frac{1}{3}\Big \{  R^m_{\ m \cdot l \cdot j} - R^m_{\ l \cdot m \cdot j}\Big \}\nonumber\\
& =  \frac{1}{2} R^m_{\ m\cdot j\cdot l}- \frac{1}{6} R^m_{\ m\cdot j \cdot l} -\frac{1}{3} R^m_{\ l\cdot m \cdot j} \nonumber\\
& =  \frac{1}{2} R^m_{\ m\cdot j\cdot l}- \frac{1}{6} \Big \{ 2 R^m_{\ l\cdot j} + R^m_{\ m \cdot l} \Big \}_{\cdot j}\nonumber\\
& =  \frac{1}{2} {\Ric}_{\cdot j \cdot l}+ \chi_{l\cdot j}.
\end{split}
\end{align*}
Let
\[
\widetilde{\Ric}_{jl}:={1\over2} {\Ric}_{\cdot j\cdot l}.
\]
We therefore obtain the following.

\begin{lemma}[\cite{SL1}]
The Ricci curvature tensor and the Ricci curvature are related by
\begin{align}\label{Ric_jl}
{\Ric}_{jl}={1\over2} {\Ric}_{\cdot j\cdot l}+\chi_{l\cdot j}.
\end{align}
\end{lemma}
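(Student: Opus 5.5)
The plan is to start from the definition $\Ric_{jl}=R^{\ m}_{j\ ml}$ and substitute formula (\ref{R_ijkl}) with $i=k=m$ contracted. This gives
\[
\Ric_{jl}={1\over3}\left\{R^m_{\ m\cdot j\cdot l}-R^m_{\ l\cdot j\cdot m}\right\}.
\]
Here vertical derivatives commute, since they are ordinary $y$-derivatives of functions. The whole task is then to rewrite the second term using a $y$-derivative of the expression for $\chi$ from Lemma \ref{chi_k_R}.

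The key input is the identity obtained in that proof, in the form
\[
2R^m_{\ l\cdot m}+R^m_{\ m\cdot l}=-6\chi_l.
\]
It follows from contracting $R^{\ i}_{j\ kl}y^l={1\over3}(2R^i_{\ k\cdot j}+R^i_{\ j\cdot k})$ on $i=j$ together with Lemma \ref{chi_k_R}. Differentiating in $y^j$ gives
\[
R^m_{\ l\cdot m\cdot j}=-3\chi_{l\cdot j}-{1\over2}R^m_{\ m\cdot l\cdot j}.
\]
Substituting this into the expression for $\Ric_{jl}$ yields
\[
\Ric_{jl}={1\over3}R^m_{\ m\cdot j\cdot l}+{1\over6}R^m_{\ m\cdot j\cdot l}+\chi_{l\cdot j}={1\over2}\Ric_{\cdot j\cdot l}+\chi_{l\cdot j},
\]
using $\Ric=R^m_{\ m}$. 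This is exactly (\ref{Ric_jl}).

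The main obstacle is bookkeeping. One must check the index placement in (\ref{R_ijkl}) when contracting the upper index with the third slot, and confirm the contracted identity for $\chi$ with the correct factors $2$ and $1$. I would verify the latter by contracting $R^{\ i}_{j\ kl}y^l$ with $\delta^j_i$ and comparing with Lemma \ref{chi_k_R}. As a consistency check, contracting (\ref{Ric_jl}) with $y^jy^l$ should give
\[
\Ric_{jl}y^jy^l=\Ric+\chi_{l\cdot j}y^jy^l=\Ric .
\]
This holds because $\chi_l y^l=0$ and $\chi_l$ is homogeneous of degree one, so $\chi_{l\cdot j}y^j=\chi_l$ and then $\chi_ly^l=0$. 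This agrees with $\Ric=\Ric_{jl}y^jy^l$ stated just before the lemma.
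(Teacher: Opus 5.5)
Your proposal is correct and is essentially the paper's own derivation. Both contract (\ref{R_ijkl}) to $\Ric_{jl}=\frac13\{R^m_{\ m\cdot j\cdot l}-R^m_{\ l\cdot j\cdot m}\}$ and then eliminate the second term using the $y^j$-derivative of $\chi_l=-\frac16(2R^m_{\ l\cdot m}+R^m_{\ m\cdot l})$ from Lemma~\ref{chi_k_R}; your bookkeeping is in fact cleaner than the paper's display, which misprints $2R^m_{\ l\cdot j}$ where $2R^m_{\ l\cdot m}$ is meant.
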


Contracting (\ref{Ric_jl}) with $y^j$ gives
\[
\chi_l=({\Ric}_{jl}-\widetilde{\Ric}_{jl})y^j.
\]

A spray is called \emph{Ricci flat} if $\Ric=0$. It is called \emph{strongly Ricci flat} if, in addition, ${\Ric}_{jl}=0$.
By symmetrizing ${\Ric}_{jl}$, define
\[
\overline{\Ric}_{jl}:={1\over2}({\Ric}_{jl}+{\Ric}_{lj}).
\]
Then
\[
\overline{\Ric}_{jl}=\widetilde{\Ric}_{jl}+H_{jl}.
\]
It follows that
\[
\overline{\Ric}_{jl}=0
\quad\Longleftrightarrow\quad
{\Ric}=0\quad\hbox{and}\quad H_{jl}=0.
\]
Consequently:
\begin{proposition}[\cite{SL1}]
$\Ric_{jl}=0$ is equivalent to
\[
\Ric=0
\qquad\hbox{and}\qquad
\chi_k=0.
\]
\end{proposition}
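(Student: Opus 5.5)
The plan is to use the identity (\ref{Ric_jl}), ${\Ric}_{jl}=\widetilde{\Ric}_{jl}+\chi_{l\cdot j}$ with $\widetilde{\Ric}_{jl}={1\over2}{\Ric}_{\cdot j\cdot l}$, and to prove the two implications separately.

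\emph{The direction ``$\Leftarrow$''.} Assume $\Ric=0$ and $\chi_k=0$. Then $\widetilde{\Ric}_{jl}={1\over2}{\Ric}_{\cdot j\cdot l}=0$, since we are differentiating the zero function in $y$. Likewise $\chi_{l\cdot j}=0$. Hence (\ref{Ric_jl}) gives ${\Ric}_{jl}=0$ immediately.

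\emph{The direction ``$\Rightarrow$''.} Assume ${\Ric}_{jl}=0$. The first conclusion comes from contracting twice: ${\Ric}={\Ric}_{jl}y^jy^l=0$. For the second, contract (\ref{Ric_jl}) with $y^j$ only, which is the identity recorded just after the lemma:
\[
\chi_l=({\Ric}_{jl}-\widetilde{\Ric}_{jl})y^j .
\]
The first term vanishes by hypothesis. The second vanishes because $\Ric=0$ has already been established, so $\widetilde{\Ric}_{jl}=0$. Therefore $\chi_l=0$.

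It is worth checking why the contraction identity holds, since it is the only step needing thought. It rests on the fact that $\chi_l$ is positively homogeneous of degree one in $y$, which can be read off from (\ref{chi_local}) or from Lemma \ref{chi_k_R}, where $\chi_k=-{1\over2}R^{\ m}_{m\ kl}y^l$ with $R^{\ m}_{m\ kl}$ homogeneous of degree zero. Euler's theorem then gives $\chi_{l\cdot j}y^j=\chi_l$. Contracting (\ref{Ric_jl}) with $y^j$ therefore yields ${\Ric}_{jl}y^j=\widetilde{\Ric}_{jl}y^j+\chi_l$, which is the displayed identity. So the whole proof reduces to this homogeneity bookkeeping, and no other obstacle is expected.

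Alternatively, one could use the route through $\overline{\Ric}_{jl}=0\Leftrightarrow \Ric=0,\ H_{jl}=0$ together with $\chi_j=-2H_{ij}y^i$. However, the antisymmetric part of ${\Ric}_{jl}$ would then need separate treatment, so the direct contraction above is cleaner.
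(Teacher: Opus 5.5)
Your argument is correct, and it takes a more direct route than the paper. The paper presents the proposition as a consequence of the symmetrized identity $\overline{\Ric}_{jl}=\widetilde{\Ric}_{jl}+H_{jl}$, that is, of $\overline{\Ric}_{jl}=0\Leftrightarrow\Ric=0,\ H_{jl}=0$. On that route one still has to pass from $H_{jl}=0$ to $\chi_k=0$, and for this the $H$-curvature lemma offers $\chi_j=-2H_{ij}y^i$. You instead contract the unsymmetrized identity (\ref{Ric_jl}) with $y^j$ and use only that $\chi_l$ is homogeneous of degree one, so the $H$-curvature never enters.

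This buys more than brevity, because the contraction $\chi_j=-2H_{ij}y^i$ is not valid. Since $E_{ij}y^i={1\over2}S_{\cdot i\cdot j}y^i=0$ and $y^i_{|m}=0$, one has $H_{ij}y^i=0$ identically. The paper's computation drops the term $\chi_{j\cdot i}y^i=\chi_j$, which cancels $\chi_{i\cdot j}y^i=-\chi_j$. In fact $H_{jl}=0$ does not imply $\chi=0$. Take the affine spray $G^1={1\over2}x^2(y^1)^2$, $G^2=0$ on ${\bf R}^2$. Then $\Pi=x^2y^1$, so $E_{ij}=0$ and $H=0$, while (\ref{chi_local}) gives $\chi_1={1\over2}y^2$ and $\chi_2=-{1\over2}y^1$.

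So on the $H$ route the antisymmetric part $\Ric_{jl}-\Ric_{lj}=\chi_{l\cdot j}-\chi_{j\cdot l}$ genuinely has to be used, as you anticipated. Together with $H_{jl}={1\over2}(\chi_{j\cdot l}+\chi_{l\cdot j})=0$ it gives $\chi_{l\cdot j}=0$, and then $\chi_l=\chi_{l\cdot j}y^j=0$. Your single contraction handles both parts at once and is the clean way to close the forward direction.
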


For a general spray one may consider curvature conditions such as $\Ric>0$, $\Ric<0$, $\overline{\Ric}_{ij}>0$, or $\overline{\Ric}_{ij}<0$. At present, the full geometric significance of these conditions is not yet clear.

If the spray is induced by a positive definite Finsler metric $F$, then one may consider the Einstein condition, namely that the Ricci curvature (tensor) is isotropic,
\[
\Ric=(n-1)c(x)F^2,
\qquad
{\Ric}_{jl}=(n-1)c(x)g_{jl}.
\]
Without a metric structure, there is in general no further natural averaging of the Ricci curvature.

\section{The $S$-Curvature Deformation of a Spray}
Let $G$ be a spray on an $n$-dimensional manifold $M$, and let $dv$ be a volume form on $M$. Let $S=S(G,dv)$ be the $S$-curvature of $(G,dv)$. As observed in Section 4, when $S=0$, the geodesic flow preserves the induced volume form $d\hat{v}$. We shall prove that, for a fixed volume form, every spray is projectively equivalent to a spray with vanishing $S$-curvature. In fact, one can define a new spray $\widehat{G}$ by
\[
\widehat G^i=G^i-{S\over n+1}y^i.
\]
Clearly, $\widehat G$ is projectively related to $G$. Its $S$-curvature is
\[
\begin{array}{rcl}
\widehat S
&=&{\partial\widehat G^m\over\partial y^m}
-y^m{\partial\over\partial x^m}\ln\sigma\\[3pt]
&=&S-{1\over n+1}{\partial\over\partial y^m}(Sy^m)=0.
\end{array}
\]
Therefore
\[
\widehat\chi_k={1\over2}
(\widehat S_{|m\cdot k}y^m-\widehat S_{|k})=0.
\]
We call $\widehat G$ the spray associated with the pair $(G,dv)$.

\begin{lemma}[\cite{Sh4}]
Suppose that $G_1$ and $G_2$ are projectively related sprays on an $n$-dimensional manifold $M$. For any fixed volume form $dv$, the sprays $\widehat G_1$ and $\widehat G_2$ associated with $(G_1,dv)$ and $(G_2,dv)$ coincide.
\end{lemma}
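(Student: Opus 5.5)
Since $G_1$ and $G_2$ are projectively related, there is a function $P=P(x,y)$ on $TM\setminus\{0\}$, positively homogeneous of degree one in $y$, such that
\[
G_2^i=G_1^i+Py^i .
\]
The plan is to compute the $S$-curvature of $G_2$ with respect to $dv$ in terms of that of $G_1$. Then we substitute into the definition $\widehat G^i=G^i-{S\over n+1}y^i$ and check that the $P$-terms cancel.

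First, with $dv=\sigma(x)dx^1\cdots dx^n$, we have
\[
S_2={\partial G_2^m\over\partial y^m}-y^m{\partial\over\partial x^m}\ln\sigma
={\partial G_1^m\over\partial y^m}+{\partial\over\partial y^m}(Py^m)-y^m{\partial\over\partial x^m}\ln\sigma .
\]
By Euler's relation for the degree-one function $P$,
\[
{\partial\over\partial y^m}(Py^m)=P_{\cdot m}y^m+nP=(n+1)P .
\]
Hence
\[
S_2=S_1+(n+1)P .
\]
This is the same computation as the one in the text showing $\widehat S=0$, with $-S/(n+1)$ replaced by $P$.

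Next we substitute:
\[
\widehat G_2^i=G_2^i-{S_2\over n+1}y^i
=G_1^i+Py^i-{S_1\over n+1}y^i-Py^i=\widehat G_1^i .
\]
So $\widehat G_1=\widehat G_2$.

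The only points requiring care are the following.
\begin{itemize}
\item The function $P$ must be positively homogeneous of degree one. This follows from both $G_1^i$ and $G_2^i$ being homogeneous of degree two: contract $G_2^i-G_1^i=Py^i$ with any fixed covector, or note that $P=(G_2^m-G_1^m)_{\cdot m}/(n+1)$.
\item The characterization of projective relatedness by $G_2^i=G_1^i+Py^i$ is standard and is implicitly assumed in the definition of $\widehat G$ above.
\end{itemize}
There is no real obstacle: the argument is a direct two-line computation once the Euler identity is applied.
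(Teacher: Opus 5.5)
Your proof is correct and is essentially the paper's argument: write one spray as the other plus $Py^i$, use Euler's relation to get $S_2=S_1+(n+1)P$, and observe that the $P$-terms cancel in $\widehat G^i=G^i-\frac{S}{n+1}y^i$. The paper does the same computation with the roles of the two sprays swapped. One small aside: your alternative justification of homogeneity via $P=(G_2^m-G_1^m)_{\cdot m}/(n+1)$ is circular, since that formula already uses Euler's relation for $P$, but your first argument (or simply comparing degrees in $Py^i$) suffices.
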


\noindent {\bf Proof.}
Suppose
\[
G^i_1=G^i_2+Py^i.
\]
Then
\[
S_1=S_2+(n+1)P.
\]
Therefore
\[
\widehat G^i_1
= G^i_1 -{S_1\over n+1}y^i = G^i_2 + P y^i - \frac{S_2 + (n+1)P}{n+1} y^i
=G^i_2-{S_2\over n+1}y^i
=\widehat G^i_2.
\]
\hfill {\it Q.E.D.}

Thus, for a fixed volume form, every geometric quantity of $\widehat G$ is a projective invariant of $G$. If such a quantity is independent of the volume form, then it is an intrinsic projective invariant.

\begin{lemma}[\cite{Sh4}]
The Riemann curvature of $\widehat G$ is
\begin{align}\label{Rik}
\widehat R^i_{\ k}
=R^i_{\ k}+\tau\delta^i_k-{1\over2}\tau_{\cdot k}y^i
+{3\chi_k\over n+1}y^i,
\end{align}
where
\[
\tau:=\left({S\over n+1}\right)^2
+{1\over n+1}S_{|m}y^m.
\]
\end{lemma}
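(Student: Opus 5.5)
The plan is to use the classical transformation rule for the Riemann curvature under a projective change. Afterwards we specialize to $P=-S/(n+1)$ and identify the leftover terms through the local formula (\ref{chi_local}) for $\chi_k$.

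\emph{Step 1: the general projective rule.} For $\widetilde G^i=G^i+Py^i$, with $P$ positively homogeneous of degree one in $y$, substituting into the coordinate formula for $R^i_{\ k}$ from Section 3 gives the standard identity
\[
\widetilde R^i_{\ k}=R^i_{\ k}+\Xi\,\delta^i_k+\Lambda_k y^i,
\qquad
\Xi=P^2-P_{|m}y^m,\qquad
\Lambda_k=3(P_{|k}-PP_{\cdot k})+\Xi_{\cdot k}.
\]
Here $|$ is the horizontal covariant derivative of $G$. I will derive this directly. Write $\widetilde N^i_j=N^i_j+P_{\cdot j}y^i+P\delta^i_j$, and similarly for the second $y$-derivatives. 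Then expand $2\partial_{x^k}\widetilde G^i-y^m\partial_{x^m}\widetilde G^i_{\cdot k}+2\widetilde G^m\widetilde G^i_{\cdot m\cdot k}-\widetilde G^i_{\cdot m}\widetilde G^m_{\cdot k}$. Finally, regroup the $x$-derivatives of $P$ into $P_{|k}=\partial_{x^k}P-N^m_kP_{\cdot m}$, using the homogeneity relations $P_{\cdot m}y^m=P$ and $G^i_{\cdot m}y^m=2G^i$.

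\emph{Step 2: specialize.} Take $P=-S/(n+1)$. Then
\[
\Xi=\Bigl({S\over n+1}\Bigr)^2+{1\over n+1}S_{|m}y^m=\tau,
\]
which gives the term $\tau\delta^i_k$ in (\ref{Rik}). For the $y^i$ term, write $\Lambda_k=-\frac12\tau_{\cdot k}+\bigl(\frac32\tau_{\cdot k}+3(P_{|k}-PP_{\cdot k})\bigr)$. It then remains to show
\[
\tfrac32\tau_{\cdot k}+3(P_{|k}-PP_{\cdot k})={3\over n+1}\chi_k .
\]
Since $\tau_{\cdot k}=2PP_{\cdot k}-(P_{|m}y^m)_{\cdot k}$, the $PP_{\cdot k}$ terms cancel. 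The left side therefore reduces to
\[
{3\over 2(n+1)}\bigl((S_{|m}y^m)_{\cdot k}-2S_{|k}\bigr).
\]
By the commutation $(S_{|m}y^m)_{\cdot k}=S_{\cdot k|m}y^m+S_{|k}$, this becomes
\[
{3\over 2(n+1)}\bigl(S_{\cdot k|m}y^m-S_{|k}\bigr)={3\over n+1}\chi_k,
\]
by definition (\ref{chi}). This is the same commutation used for $\chi_{i\cdot j}$ in the proof of (\ref{H_ij}), and it reflects the symmetry of $\Gamma^i_{jk}$.

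\emph{Main obstacle.} The main work is Step 1: the bookkeeping in expanding the Riemann curvature of $\widetilde G$. The delicate points are the cross terms $2Py^m\widetilde G^i_{\cdot m\cdot k}$ and $\widetilde G^i_{\cdot m}\widetilde G^m_{\cdot k}$, where homogeneity must be used to collapse $P_{\cdot m\cdot k}y^m=0$ and $P_{\cdot m}y^m=P$. One also has to check that every $x$-derivative of $P$ assembles into horizontal derivatives $P_{|k}$ and $(P_{|m}y^m)_{\cdot k}$ rather than leaving stray $N^m_k$ terms.

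As a consistency check, I will contract with $y^k$. Then $\chi_ky^k=0$ and $\tau_{\cdot k}y^k=2\tau$, so $\widehat R^i_{\ k}y^k=0$, as it must be. Tracing over $i=k$ gives
\[
\widehat{\Ric}=\Ric+(n-1)\tau,
\]
which can be checked independently against the known projective change of Ricci curvature.
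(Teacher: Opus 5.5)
Your proposal is correct and is essentially the paper's own argument. The paper only says the formula ``follows by direct computation,'' and your route is a standard way to carry that computation out: the projective-change rule $\widetilde R^i_{\ k}=R^i_{\ k}+\Xi\,\delta^i_k+\Lambda_k y^i$, whose coefficients agree with the coordinate formula for $R^i_{\ k}$, specialized to $P=-S/(n+1)$. The commutation $S_{|m\cdot k}=S_{\cdot k|m}$ then converts the leftover $y^i$-coefficient into $\frac{3}{n+1}\chi_k$.
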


\noindent {\bf Proof.}
This follows by direct computation.
\hfill {\it Q.E.D.}

Note that $\widehat R^i_{\ k}$ is a projective invariant for a fixed volume form. However, $\widehat R^i_{\ k}$ depends on $dv$. From (\ref{Rik}), we have
\[
\widehat R=R+\tau.
\]
Hence
\begin{align*}
\widehat T^i_{\ k}
&=\widehat R^i_{\ k}
-\left(\widehat R\delta^i_k-{1\over2}\widehat R_{\cdot k}y^i\right)\\
&= R^i_{\ k} + \tau \delta^i_k - \frac{1}{2}\tau_{\cdot k} y^i + \frac{3 \chi_k}{n+1} y^i -(R +\tau)\delta^i_k+\frac{1}{2}(R_{\cdot k} +\tau_{\cdot k})y^i\\
&= R^i_{\ k} - (R \delta^i_k -\frac{1}{2} R_{\cdot k} y^i ) + \frac{3 \chi_k}{n+1} y^i \\
&=T^i_{\ k}+{3\chi_k\over n+1}y^i.
\end{align*}
By (\ref{W_ik}), this is precisely the Weyl curvature $W^i_{\ k}$ and is independent of $dv$.

Similarly, the Berwald curvature of $\widehat G$ is independent of $dv$.

\begin{proposition}[\cite{Sh4}]
Both $\widehat B^{\ i}_{j\ kl}$ and $\widehat T^i_{\ k}$ are independent of the volume form $dv$. In fact,
\[
\widehat B^{\ i}_{j\ kl}=D^{\ i}_{j\ kl},
\qquad
\widehat T^i_{\ k}=W^i_{\ k},
\]
where $D$ and $W$ are the Douglas and Weyl curvatures of $G$, respectively.
\end{proposition}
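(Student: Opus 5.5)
The proposition has two parts, one for the Berwald curvature and one for the tensor $T$; the second is essentially done already. For $\widehat T^i_{\ k}$ I will use the computation just before the statement. From the formula (\ref{Rik}) for $\widehat R^i_{\ k}$ and $\widehat R=R+\tau$, we obtained $\widehat T^i_{\ k}=T^i_{\ k}+\frac{3}{n+1}\chi_k y^i$. Lemma \ref{lemWeyl} says the right-hand side equals $W^i_{\ k}$. Since $T^i_{\ k}$ is built only from $R^i_{\ k}$ of $G$, and $\chi_k$ is independent of $dv$ by (\ref{chi_local}), the tensor $\widehat T^i_{\ k}=W^i_{\ k}$ does not depend on the volume form.

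For the Berwald curvature I will differentiate $\widehat G^i=G^i-\frac{S}{n+1}y^i$ three times in $y$. Using $\partial y^i/\partial y^j=\delta^i_j$, this gives
\[
\widehat B^{\ i}_{j\ kl}=B^{\ i}_{j\ kl}-\frac{1}{n+1}\Bigl(S_{\cdot j\cdot k\cdot l}\,y^i+S_{\cdot j\cdot k}\,\delta^i_l+S_{\cdot j\cdot l}\,\delta^i_k+S_{\cdot k\cdot l}\,\delta^i_j\Bigr).
\]
Next, note that $S=\Pi-y^m\partial_m\ln\sigma$, and the second term is linear in $y$. Hence $S_{\cdot j\cdot k}=\Pi_{\cdot j\cdot k}$ and $S_{\cdot j\cdot k\cdot l}=\Pi_{\cdot j\cdot k\cdot l}$, where $\Pi=\partial G^m/\partial y^m$. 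This shows at once that $\widehat B$ is independent of $dv$.

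It remains to identify $\widehat B$ with the Douglas tensor. Using $E_{ij}=\frac12 S_{\cdot i\cdot j}$ and $E_{ij\cdot k}=\frac12 S_{\cdot i\cdot j\cdot k}$, the expression above becomes
\[
\widehat B^{\ i}_{j\ kl}=B^{\ i}_{j\ kl}-\frac{2}{n+1}\Bigl(E_{jk}\delta^i_l+E_{jl}\delta^i_k+E_{kl}\delta^i_j+E_{jk\cdot l}\,y^i\Bigr).
\]
This is the standard Douglas curvature $D^{\ i}_{j\ kl}$. Equivalently, $D$ is what one gets by applying the same triple $y$-derivative to $G^i-\frac{1}{n+1}\Pi\, y^i$.

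I expect no step to be a real obstacle. The only point needing care is matching the normalization of $D$ with the convention used in the paper. If $D$ is defined as $\frac{\partial^3}{\partial y^j\partial y^k\partial y^l}\bigl(G^i-\frac{1}{n+1}\frac{\partial G^m}{\partial y^m}y^i\bigr)$, the identification is immediate from the computation above, because the linear term in $S$ drops out.
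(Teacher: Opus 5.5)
Your proposal is correct and follows essentially the paper's route. The $\widehat T$ part is exactly the paper's computation from (\ref{Rik}) together with Lemma~\ref{lemWeyl}. For $\widehat B$ the paper only says ``similarly,'' and your explicit triple $y$-differentiation of $\widehat G^i=G^i-\frac{S}{n+1}y^i$ supplies that omitted detail. That differentiation shows the linear term $y^m\frac{\partial}{\partial x^m}\ln\sigma$ drops out, and it recovers the standard Douglas formula in terms of $E_{jk}$.
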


\section{Pontrjagin Classes of Locally Projectively Flat Sprays}

A spray $G$ on an $n$-dimensional manifold $M$ is called \emph{locally projectively flat} if, at every point, there is a local coordinate system in which
\[
G^i=Py^i.
\]

\begin{example}
Let $G_\kappa$ be the spray induced by an $n$-dimensional Riemannian metric $\alpha_\kappa$ of constant sectional curvature $\kappa$. Let $P$ be any positively homogeneous function of degree one on $TM$. Then a projective deformation of $G_\kappa$ by $P$ gives a locally projectively flat spray. In particular, for a given volume form $dv$, one may take
\[
P=-{S\over n+1},
\]
where $S$ is the $S$-curvature of $(G_\kappa,dv)$.
\end{example}

It is natural to ask whether every manifold admits a forward complete locally projectively flat spray, and whether every closed manifold admits a locally projectively flat spray.

Let $G$ and $dv$ be a spray and a volume form on $M$, and let $\widehat G$ be the spray associated with $(G,dv)$:
\[
\widehat G^i=G^i-{S\over n+1}y^i.
\]
The Berwald curvature of $\widehat G$ is the Douglas curvature of $G$:
\[
\widehat B^{\ i}_{j\ kl}=D^{\ i}_{j\ kl}.
\]
Suppose $D^{\ i}_{j\ kl}=0$. Then $\widehat G$ is an affine spray, and its Berwald connection reduces to a torsion-free linear connection on $M$:
\[
\widehat G^i={1\over2}\widehat\Gamma^i_{jk}(x)y^jy^k,
\qquad
\widehat\Gamma^i_{jk}=\widehat\Gamma^i_{kj}.
\]
Its curvature form is a local form on $M$,
\[
\widehat\Omega^{\ i}_j={1\over2}\widehat R^{\ i}_{j\ kl}(x)dx^k\wedge dx^l.
\]
The real Pontrjagin classes of $M$ can be represented in the following form.
\begin{align}\label{P_k}
P_k = \frac{1}{(2k)^{2k}}\Big[ \sigma_{2k}( \widehat{\Omega}) \Big],
\end{align}
where
\begin{align*}
\begin{split}
\sigma_{2k}(\widehat{\Omega}) & = \sum_{i_1 < i_2< \cdot \cdot \cdot < i_{2k}, \ \sigma\in S_{2k}}\operatorname{sgn}(\sigma)
\widehat{\Omega}^{\ i_{\sigma(1)}}_{i_1}\wedge \cdot \cdot \cdot \wedge \widehat{\Omega}^{\ i_{\sigma(2k)}}_{i_{2k}} \\
& = \frac{1}{2^{2k}} \sum_{i_1 < i_2< \cdot \cdot \cdot < i_{2k}, \ \sigma\in S_{2k}}\operatorname{sgn}(\sigma)
\widehat{R}^{\ i_{\sigma(1)}}_{i_1 \ m_1 l_1} \cdot \cdot \cdot \widehat{R}^{\ i_{\sigma(2k)}}_{i_{2k} \ m_{2k} l_{2k}}
dx^{m_1}\wedge dx^{l_1}\wedge \cdot \cdot \cdot \wedge dx^{m_{2k}}\wedge dx^{l_{2k}}.
\end{split}
\end{align*}
It is a $4k$-form on $M$.

\begin{theorem}[\cite{SZ}]
If a manifold $M$ admits a locally projectively flat spray, then all real Pontrjagin classes of $M$ vanish.
\end{theorem}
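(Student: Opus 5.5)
We prove the theorem by Chern--Weil theory applied to the affine spray $\widehat G$ associated with $(G,dv)$ for an arbitrary volume form $dv$. The aim is to show that $\widehat G$ comes from a torsion-free linear connection whose curvature is so special that every Pontrjagin form vanishes identically.

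The first step is to note that local projective flatness is a projectively invariant condition, so the Douglas curvature $D$ and the Weyl curvature $W$ of $G$ vanish. Indeed, in the flat coordinates $G^i=Py^i$, one computes directly that
\[
D^{\ i}_{j\ kl}=0,\qquad W^i_{\ k}=0.
\]
By the Proposition identifying $\widehat B=D$ and $\widehat T=W$, the spray $\widehat G$ is therefore affine, and it has isotropic curvature:
\[
\widehat R^i_{\ k}=\widehat R\,\delta^i_k-\tfrac12\widehat R_{\cdot k}y^i .
\]
Since $\widehat G$ is affine, $\widehat R^i_{\ k}$ is quadratic in $y$. Write $\widehat R=\lambda_{jl}(x)y^jy^l$. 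Applying (\ref{R_ijkl}) to $\widehat G$ then gives the full tensor in projectively flat form:
\[
\widehat R^{\ i}_{j\ kl}=\delta^i_k\mu_{jl}-\delta^i_l\mu_{jk}+(\text{a term }\delta^i_j\,\nu_{kl}),
\]
where $\mu,\nu$ are built from $\lambda$ and $\nu$ is skew. Carrying out this computation carefully is routine. We also note that $\widehat\chi=0$ because $\widehat S=0$, which serves as a consistency check.

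The second step, which I expect to be the main obstacle, is to show that every $\sigma_{2k}(\widehat\Omega)$ is exact. For this we rewrite the curvature form as
\[
\widehat\Omega^{\ i}_j=dx^i\wedge\theta_j+\delta^i_j\,\eta,\qquad \theta_j=\mu_{jl}dx^l,\quad \eta=\tfrac12\nu_{kl}dx^k\wedge dx^l .
\]
The Pontrjagin forms are symmetric polynomials in $\widehat\Omega$, so we may work with $\operatorname{tr}(\widehat\Omega^m)$ instead of $\sigma_{2k}$.

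1. The trace part $\delta^i_j\eta$ is central and can be separated out. Either $\eta$ itself is exact, since $\eta$ is essentially $\frac{1}{n+1}\operatorname{tr}\widehat\Omega$, which is $d$ of the trace of the connection form, or one can argue that real Pontrjagin classes of $TM$ coincide with those computed from the trace-free part.
2. The trace-free part $A=(dx^i\wedge\theta_j)$ has powers $\operatorname{tr}(A^m)=\pm(\theta_j\wedge dx^j)^m$. Here $\theta_j\wedge dx^j=\mu_{jl}\,dx^l\wedge dx^j$ is a 2-form.
3. It remains to handle these 2-forms. If $\mu$ is symmetric, $\theta_j\wedge dx^j=0$ and we are done. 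Otherwise, the skew part of $\mu$ is proportional to $\nu$, and the second Bianchi identity should show this 2-form is closed. Its class is then the first Chern-type class of the determinant line bundle, which is trivial in real cohomology, so $\theta_j\wedge dx^j$ is exact.

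Hence every term in $\sigma_{2k}(\widehat\Omega)$ is a product of exact closed forms with closed forms, and is therefore exact.

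Finally, because $\widehat G$ is globally defined once $dv$ is fixed (we may take a Riemannian volume form, or work on the orientation double cover when $M$ is non-orientable), its Berwald connection is a global torsion-free connection on $TM$. By (\ref{P_k}), the real Pontrjagin classes are represented by the forms $\sigma_{2k}(\widehat\Omega)$, which are exact, so all real Pontrjagin classes of $M$ vanish.
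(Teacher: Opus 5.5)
Your strategy is the paper's: Chern--Weil for $\widehat G$, with $D=0$ making $\widehat G$ affine and $\widehat T=W=0$ giving $\widehat R^i_{\ k}=\widehat R\,\delta^i_k-\frac12\widehat R_{\cdot k}y^i$. The weak point is that you called ``routine'' the one computation that finishes the proof and did not carry it out. As a result you spent your effort on a case that cannot occur.

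Substitute the isotropic form into (\ref{R_ijkl}) and use $\widehat R_{\cdot j\cdot k\cdot l}=0$, since $\widehat R$ is quadratic in $y$. The $\delta^i_j$-terms cancel because $\widehat R_{\cdot k\cdot l}=\widehat R_{\cdot l\cdot k}$, and you get
\[
\widehat R^{\ i}_{j\ kl}=\tfrac12\bigl(\widehat R_{\cdot j\cdot l}\,\delta^i_k-\widehat R_{\cdot j\cdot k}\,\delta^i_l\bigr),
\qquad
\widehat\Omega^{\ i}_j=\tfrac12\widehat R_{\cdot j\cdot l}\,dx^i\wedge dx^l .
\]
So in your notation $\mu_{jl}=\frac12\widehat R_{\cdot j\cdot l}=\lambda_{jl}$ is a Hessian, hence symmetric, and $\nu=0$. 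Then $\theta_j\wedge dx^j=0$, which gives $\operatorname{tr}\widehat\Omega=0$ and $\widehat\Omega^{\ i}_m\wedge\widehat\Omega^{\ m}_j=0$ entrywise. Every $\sigma_{2k}(\widehat\Omega)$ therefore vanishes identically, not merely up to an exact form. This is exactly the paper's conclusion, and your Step~2 ``main obstacle'' disappears.

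Your non-symmetric branch, as written, is only a sketch (``should show''). The relation it needs, $\nu_{kl}=\mu_{kl}-\mu_{lk}$, comes from the first Bianchi identity, not the second. The branch is also vacuous for a reason already in your setup:
\begin{itemize}
\item $\widehat S=0$ means $\widehat\Gamma^m_{mk}=\partial_k\ln\sigma$, so $\operatorname{tr}\widehat\Omega=d(d\ln\sigma)=0$;
\item the first Bianchi identity gives $\operatorname{tr}\widehat\Omega=(n+1)\eta$;
\item hence $\eta=0$.
\end{itemize}

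If you did complete that branch rigorously, you would prove something more general. Using $\theta_j\wedge dx^j=-\eta=-\frac{1}{n+1}\operatorname{tr}\widehat\Omega$, and the fact that the curvature trace is globally exact (it differs from the zero trace of a Levi-Civita connection by $d$ of a global $1$-form), you would get that the Pontrjagin forms of any torsion-free projectively flat connection are exact. The paper avoids this detour through the $S$-curvature normalization $\widehat S=0$, which is precisely what makes the forms vanish pointwise.
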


\noindent {\bf Proof.}
Let $dv$ be a volume form, and let $\widehat G$ be the spray associated with $(G,dv)$.
Since the Douglas curvature of $G$ vanishes, the Pontrjagin classes $P_k$ of $M$ can be represented by the closed forms (\ref{P_k}) on $M$.

The Weyl curvature of $G$ is
\[
W^i_{\ k}=\widehat T^i_{\ k}
=\widehat R^i_{\ k}-\left(\widehat R\delta^i_k-\frac{1}{2}\widehat R_{\cdot k}y^i\right).
\]
Since $W^i_{\ k}=0$, we have
\[
\widehat R^i_{\ k}
=\widehat R\delta^i_k-\frac{1}{2}\widehat R_{\cdot k}y^i.
\]
Then
\begin{align*}
\begin{split}
\widehat R^{\ i}_{j\ kl}
&=\frac{1}{2}\left(\widehat R_{\cdot l\cdot j}\delta^i_k
-\widehat R_{\cdot k\cdot j}\delta^i_l\right),\\
\widehat\Omega^{\ i}_j
&=\frac{1}{4}\left(\widehat R_{\cdot l\cdot j}\delta^i_k
-\widehat R_{\cdot k\cdot j}\delta^i_l\right)dx^k\wedge dx^l\\
&=\frac{1}{2}\widehat R_{\cdot l\cdot j}dx^i\wedge dx^l.
\end{split}
\end{align*}
Thus
\begin{align*}
\begin{split}
&\quad\sigma_{2k}(\widehat{\Omega}) \\
&=\frac{1}{2^{2k}}
\sum_{i_1<i_2<\cdots<i_{2k},\ \sigma\in S_{2k}}
\operatorname{sgn}(\sigma)
\widehat R_{\cdot l_1\cdot i_1}\cdots \widehat R_{\cdot l_{2k}\cdot i_{2k}}
dx^{i_{\sigma(1)}}\wedge dx^{l_1}\wedge \cdots \wedge dx^{i_{\sigma(2k)}}\wedge dx^{l_{2k}}\\
&=\frac{1}{2^{2k}}
\sum_{i_1<i_2<\cdots<i_{2k},\ \sigma\in S_{2k}}
\operatorname{sgn}(\sigma)
\widehat R_{\cdot l_1\cdot i_1}\cdots \widehat R_{\cdot l_{2k}\cdot i_{2k}}
dx^{i_{\sigma(1)}}\wedge\cdots\wedge dx^{i_{\sigma(2k)}}
\wedge\cdots \wedge dx^{l_1}\wedge \cdots\wedge dx^{l_{2k}}\\
&=\frac{1}{2^{2k}}
\sum_{i_1<i_2<\cdots<i_{2k},\ \sigma\in S_{2k}}
\operatorname{sgn}(\sigma)
R_{\cdot l_1\cdot i_1}\cdots R_{\cdot l_{2k}\cdot i_{2k}}
dx^{i_{\sigma(1)}}\wedge\cdots\wedge dx^{i_{\sigma(2k)}}
\wedge\cdots\wedge dx^{l_1}\wedge\cdots\wedge dx^{l_{2k}}\\
&=\frac{1}{2^{2k}}
\sum_{i_1<i_2<\cdots<i_{2k}}
R_{\cdot l_1\cdot i_1}\cdots R_{\cdot l_{2k}\cdot i_{2k}}
dx^{i_1}\wedge\cdots\wedge dx^{i_{2k}}
\wedge\cdots\wedge dx^{l_1}\wedge\cdots\wedge dx^{l_{2k}}.
\end{split}
\end{align*}

Since $R_{\cdot i\cdot j}=R_{\cdot j\cdot i}$, all the relevant terms vanish identically. Hence
\[
\sigma_{2k}(\widehat\Omega)=0.
\]
\hfill {\it Q.E.D.}

\begin{example}[\cite{SZ}]
It is a known fact that the Pontrjagin classes of $\mathbb{C}P^n$ $(n\geq 3)$ are nonzero. Therefore, $\mathbb{C}P^n$ $(n\geq 3)$ does not admit a locally projectively flat spray.
\end{example}

\section{Sprays of Scalar Curvature and Isotropic Curvature}

A spray is said to have \emph{scalar curvature} if
\[
R^i_{\ k}=R\delta^i_k-\tau_ky^i,
\]
where $R=R(x,y)$, $\tau_k=\tau_k(x,y)$, and
\[
\tau_ky^k=R.
\]
A spray is said to have \emph{isotropic curvature} if
\[
R^i_{\ k}=R\delta^i_k-{1\over2}R_{\cdot k}y^i.
\]
Thus, in the isotropic case,
\[
\tau_k={1\over2}R_{\cdot k}.
\]
From Lemma~\ref{lemWeyl}, it is easy to obtain the following lemma.
\begin{lemma}[\cite{SL}]\label{chi_zero_scalar}
Let $G$ be a spray of scalar curvature. Then $G$ has isotropic curvature if and only if its $\chi$-curvature vanishes.
\end{lemma}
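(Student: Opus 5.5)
The plan is to work through Lemma~\ref{lemWeyl}, combined with the fact that for a spray of scalar curvature the tensor $T^i_{\ k}$ has a very restricted form. Suppose $R^i_{\ k}=R\delta^i_k-\tau_ky^i$ with $\tau_ky^k=R$. Taking the trace gives $R^m_{\ m}=nR-R=(n-1)R$. Hence the scalar $R$ appearing in the definition of $T$, namely $R^m_{\ m}/(n-1)$, is the same function $R$. Substituting into the definition of $T$ gives
\[
T^i_{\ k}=-\Bigl(\tau_k-{1\over2}R_{\cdot k}\Bigr)y^i=:-\theta_ky^i .
\]
So $T$ is \emph{pure trace-free in the $y$-direction}, of the form $\theta_k y^i$. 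By construction, $G$ has isotropic curvature exactly when $T^i_{\ k}=0$, that is, when $\theta_k=0$.

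Next I compute $\chi_k$ using the previous lemma, $\chi_k=-\frac13 T^m_{\ k\cdot m}$:
\[
T^m_{\ k\cdot m}=-(\theta_{k\cdot m}y^m+n\theta_k).
\]
Since $\tau_k$ is homogeneous of degree one in $y$ (being a coefficient of $R^i_{\ k}$ divided against $y^i$), and $R_{\cdot k}$ is also degree one, $\theta_k$ is positively homogeneous of degree one. Euler's relation then gives $\theta_{k\cdot m}y^m=\theta_k$, so $T^m_{\ k\cdot m}=-(n+1)\theta_k$ and $\chi_k=\frac{n+1}{3}\theta_k$.

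Equivalently, Lemma~\ref{lemWeyl} gives $W^i_{\ k}=T^i_{\ k}+\frac{3}{n+1}\chi_ky^i=0$, which is consistent with the remark that scalar curvature is equivalent to $W=0$. From $\chi_k=\frac{n+1}{3}\theta_k$ the equivalence follows at once: $\chi=0$ if and only if $\theta=0$ if and only if $T=0$, i.e.\ the spray has isotropic curvature. The direction "isotropic $\Rightarrow\chi=0$" was already noted after the lemma $\chi_k=-\frac13T^m_{\ k\cdot m}$.

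The only delicate point is justifying the homogeneity of $\tau_k$. One could worry that $\tau_k$ is not uniquely determined, but since $y\neq0$, the relation $R^i_{\ k}=R\delta^i_k-\tau_ky^i$ determines $\tau_k$ uniquely from $R^i_{\ k}$. Because $R^i_{\ k}$ is homogeneous of degree two, $\tau_k$ is then homogeneous of degree one, and Euler's theorem applies.
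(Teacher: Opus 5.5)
Your proof is correct and is essentially the paper's argument. The paper reads off $T^i_{\ k}=-\frac{3}{n+1}\chi_k y^i$ from Lemma~\ref{lemWeyl} together with the cited (unproved there) fact that scalar curvature forces $W^i_{\ k}=0$; your computation $T^i_{\ k}=-\theta_k y^i$, $\chi_k=\frac{n+1}{3}\theta_k$ derives exactly this identity, but verifies the vanishing of $W$ directly via Euler's relation for the degree-one $\theta_k$, so your version is self-contained.
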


\begin{lemma}\label{chi_zero}
Suppose
\[
G^i=\overline G^i+Py^i,
\]
so that $G$ and $\overline G$ are projectively related. Then their $\chi$-curvatures satisfy
\[
\chi_i=\overline\chi_i+{n+1\over2}
(P_{\cdot i|m}y^m-P_{|i}),
\]
where `` $|$" denotes covariant differentiation with respect to $G$.
\end{lemma}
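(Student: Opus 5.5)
The plan is to avoid recomputing Riemann curvature and use the definition (\ref{chi}) of $\chi$ through the $S$-curvature. Fix any volume form $dv$; since $\chi$ does not depend on $dv$, we may use the same $dv$ for both sprays. From $G^i=\overline G^i+Py^i$ we get $\partial G^m/\partial y^m=\partial\overline G^m/\partial y^m+(n+1)P$, by Euler's relation $P_{\cdot m}y^m=P$. Hence
\[
S=\overline S+(n+1)P .
\]
By (\ref{chi}),
\[
\chi_k={1\over2}\bigl(S_{\cdot k|m}y^m-S_{|k}\bigr),
\]
where ``$|$'' is taken with respect to $G$. Substituting $S=\overline S+(n+1)P$ gives
\[
\chi_k={1\over2}\bigl(\overline S_{\cdot k|m}y^m-\overline S_{|k}\bigr)+{n+1\over2}\bigl(P_{\cdot k|m}y^m-P_{|k}\bigr).
\]
The second term is exactly the required correction. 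It therefore remains to show that the first term, whose covariant derivatives are taken with respect to $G$, equals $\overline\chi_k$, whose derivatives are taken with respect to $\overline G$.

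For this, I will write the combination in local coordinates, as in (\ref{chi_local}), for a general function $f$ homogeneous of degree one:
\[
\Phi_k(f;G):=y^m{\partial^2 f\over\partial x^m\partial y^k}-2G^m{\partial^2 f\over\partial y^m\partial y^k}-{\partial f\over\partial x^k}.
\]
This agrees with $f_{\cdot k|m}y^m-f_{|k}$ because the connection terms $-N^m_k f_{\cdot m}$ cancel (the $h$-derivative $f_{|k}=\delta_k f$ and the term from $f_{\cdot k|m}y^m$ contain matching $N$-terms). Replacing $G^m$ by $\overline G^m+Py^m$ changes $\Phi_k$ only by $-2Py^m f_{\cdot m\cdot k}$, which vanishes because $f_{\cdot k}$ is homogeneous of degree $0$. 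Hence $\Phi_k(\overline S;G)=\Phi_k(\overline S;\overline G)=2\overline\chi_k$, and the formula follows.

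The step I expect to need the most care is this last invariance: the expression $f_{\cdot k|m}y^m-f_{|k}$ involves the connection only through $G^m$ contracted against $f_{\cdot m\cdot k}$. I will verify that claim explicitly from $f_{|k}=\partial_{x^k}f-N^m_kf_{\cdot m}$ and from
\[
f_{\cdot k|m}y^m=y^m\partial_{x^m}f_{\cdot k}-2G^mf_{\cdot k\cdot m}-N^m_kf_{\cdot m},
\]
using $N^m_jy^j=2G^m$. Everything else is Euler homogeneity bookkeeping.
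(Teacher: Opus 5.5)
Your proof is correct and takes essentially the same route as the paper: fix a common volume form, use $S=\overline S+(n+1)P$, and split the definition (\ref{chi}) linearly. Your extra check goes slightly beyond the paper. You show that $f_{\cdot k|m}y^m-f_{|k}$ depends on the spray only through the term $-2G^mf_{\cdot m\cdot k}$, and hence is unchanged under $G^m\mapsto\overline G^m+Py^m$ for any $f$ homogeneous of degree one. This supplies the justification the paper leaves implicit when it identifies the $G$-covariant expression in $\overline S$ with $\overline\chi_i$, and it also shows the correction term may be computed with respect to either spray, as Theorem~\ref{thmGGbar} later uses.
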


\noindent {\bf Proof.}
From $G^m = \overline{G}^m + P y^i$, a direct computation gives that the $S$-curvatures $S$ and $\bar{S}$ of $G$ and $\overline{G}$ satisfy
\[
S = \bar{S} + (n+1) P.
\]
From (\ref{chi}), we obtain
\begin{align*}
\begin{split}
 \chi_i &= \frac{1}{2}(S_{ \cdot i|m}y^m - S_{|i}) = \frac{1}{2} (\bar{S}_{\cdot i|m}y^m - \bar{S}_{|i}) + \frac{n+1}{2} (P_{\cdot i|m}y^m - P_{|i}) \\
        &= \bar{\chi}_i + \frac{n+1}{2} (P_{\cdot i|m}y^m - P_{|i}).
\end{split}
\end{align*}
\hfill {\it Q.E.D.}

\begin{theorem}\label{thmGGbar}
Suppose
\[
G^i=\overline G^i+Py^i,
\]
and suppose that $\overline G$ has isotropic curvature. Then $G$ has isotropic curvature if and only if
\[
P_{\cdot i|m}y^m-P_{|i}=0.
\]
In local coordinates this is
\begin{align}\label{P_local}
y^m{\partial^2P\over\partial x^m\partial y^i}
-{\partial P\over\partial x^i}
-2\overline G^m{\partial^2P\over\partial y^i\partial y^m}=0.
\end{align}
\end{theorem}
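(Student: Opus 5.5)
The argument combines Lemma~\ref{chi_zero_scalar}, Lemma~\ref{chi_zero}, and the projective invariance of the Weyl curvature. First, since $\overline G$ has isotropic curvature, $\overline T^i_{\ k}=0$, and the remark after the lemma expressing $\chi_k=-\frac13 T^m_{\ k\cdot m}$ gives $\overline\chi_k=0$. By Lemma~\ref{lemWeyl}, $\overline W^i_{\ k}=\overline T^i_{\ k}+\frac{3}{n+1}\overline\chi_k y^i=0$. The Weyl curvature is a projective invariant: by the Proposition of the $S$-curvature deformation section, $W^i_{\ k}=\widehat T^i_{\ k}$, and $\widehat G$ depends only on the projective class and $dv$. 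Hence $W^i_{\ k}=\overline W^i_{\ k}=0$. Lemma~\ref{lemWeyl} then gives
\[
R^i_{\ k}=R\delta^i_k-\tfrac12R_{\cdot k}y^i-\tfrac{3}{n+1}\chi_ky^i ,
\]
so $G$ is of scalar curvature with $\tau_k=\tfrac12R_{\cdot k}+\tfrac{3}{n+1}\chi_k$. We need $\tau_ky^k=R$; this holds because $\chi_ky^k=0$, which follows from the identity $\chi_j=-2H_{ij}y^i$ together with $H_{ij}y^j=-\frac12\chi_i$, or directly by contracting Lemma~\ref{chi_k_R} with $y^k$ and using the antisymmetry of $R^{\ m}_{m\ kl}$ in $k,l$.

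Next, Lemma~\ref{chi_zero_scalar} applies to $G$: it has isotropic curvature if and only if $\chi_k=0$. By Lemma~\ref{chi_zero} with $\overline\chi_i=0$,
\[
\chi_i=\tfrac{n+1}{2}\bigl(P_{\cdot i|m}y^m-P_{|i}\bigr),
\]
so $\chi_i=0$ exactly when $P_{\cdot i|m}y^m-P_{|i}=0$. This proves the equivalence.

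It remains to obtain the local form (\ref{P_local}). I would mirror the passage from (\ref{chi}) to (\ref{chi_local}). Here $P$ is a scalar on $TM$, so $P_{|i}=\delta P/\delta x^i=\partial P/\partial x^i-N^m_i\,\partial P/\partial y^m$. For the one-form $P_{\cdot i}$, $P_{\cdot i|m}y^m=y^m\,\delta P_{\cdot i}/\delta x^m-P_{\cdot j}\Gamma^j_{im}y^m$. Using $N^m_jy^j=2G^m$ and $\Gamma^j_{im}y^m=N^j_i$, we get $P_{\cdot i|m}y^m=y^m P_{x^my^i}-2G^mP_{y^iy^m}-N^j_iP_{\cdot j}$. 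The $N^j_iP_{\cdot j}$ terms cancel against those in $P_{|i}$, leaving
\[
y^mP_{x^my^i}-P_{x^i}-2G^mP_{y^iy^m}.
\]
Since the statement is written with $\overline G$, I would then substitute $G^m=\overline G^m+Py^m$. The extra term is $-2Py^mP_{y^iy^m}$, and it vanishes by Euler's theorem: $P$ is homogeneous of degree one, so $P_{\cdot i}$ has degree zero and $y^mP_{\cdot i\cdot m}=0$. This yields (\ref{P_local}).

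I expect the main obstacle to be the first paragraph: establishing that $G$ is of scalar curvature, so that Lemma~\ref{chi_zero_scalar} applies. This rests on the projective invariance of $W$, which I take from the earlier Proposition. Failing that, a direct route works: compute from (\ref{W_ik}) that $W$ is unchanged under $G\mapsto G+Py$.
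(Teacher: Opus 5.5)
Your proof is correct and follows the paper's route: Lemma~\ref{chi_zero_scalar} and Lemma~\ref{chi_zero} reduce the question to the vanishing of $P_{\cdot i|m}y^m-P_{|i}$, and the local form comes from the coordinate expression together with $y^mP_{\cdot i\cdot m}=0$. Beyond the paper, you also make explicit, via the projective invariance of $W^i_{\ k}$, that $G$ is of scalar curvature, which the paper uses tacitly when it applies Lemma~\ref{chi_zero_scalar} to $G$.

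One small repair is needed. Your first justification of $\chi_ky^k=0$ is circular: $\chi_j=-2H_{ij}y^i$ and $H_{ij}y^j=-\frac{1}{2}\chi_i$ are the same identity, so together they only give $\chi_ky^k=\chi_ky^k$. Rely solely on the antisymmetry argument from Lemma~\ref{chi_k_R}.
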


\noindent {\bf Proof.}
By Lemma~\ref{chi_zero_scalar}, the $\chi$-curvature of $\overline{G}$ vanishes, and $G$ has isotropic curvature if and only if the $\chi$-curvature of $G$ vanishes.
Then, by Lemma~\ref{chi_zero}, $G$ has isotropic curvature if and only if
\[
P_{\cdot i|m}y^m - P_{|i}=0.
\]
In local coordinates, the above equation becomes
\[
y^m \frac{\partial^2 P}{\partial x^m \partial y^i}
-\frac{\partial P}{\partial x^i}
-2 G^m\frac{\partial^2 P}{\partial y^i \partial y^m}=0.
\]
Substituting $G^m = \overline{G}^m + P y^m$ into the above equation, and using the fact that $P$ is positively homogeneous of degree one in $y$ (here we use $\frac{\partial^2 P}{\partial y^i \partial y^m} y^m =0$), we obtain
(\ref{P_local}).
\hfill {\it Q.E.D.}

The choice of $P$ and $\overline G$ in this theorem is an interesting problem. If both sprays are projectively flat, the condition is naturally satisfied whenever both have isotropic curvature. The next example illustrates how one may construct non-projectively-flat examples.

\begin{example}
Let $\overline G$ be the spray of a Randers metric of constant flag curvature, and let $P$ be a one-form satisfying (\ref{P_local}). For instance, one may take
\[
P=\langle a,y\rangle,
\]
where $a$ is a nonzero constant vector and $\langle\ ,\ \rangle$ is the Euclidean inner product. Then
\[
G^i=\overline G^i+Py^i
\]
has isotropic curvature and is projectively related to $\overline G$. By known results on Randers metrics, such examples can be non-projectively flat and need not be induced by a Finsler metric.
\end{example}

Every Finsler metric induces a spray, where here the Finsler metric may be pseudo-Finsler. For simplicity, in what follows we restrict attention to positive definite Finsler metrics.

\begin{lemma}
Let $G$ be a spray on an $n$-dimensional manifold $M$, and let $F$ be a Finsler metric on $M$. Then the spray coefficients $G^i_F$ of $F$ are
\begin{align}\label{G^i_F}
G^i_F=G^i+Py^i+Q^i,
\end{align}
where
\[
P={F_{|m}y^m\over2F},
\qquad
Q^i={F\over2}g^{il}(F_{\cdot l|m}y^m-F_{|l}).
\]
Here the covariant derivatives of $F$ are taken with respect to $G$.
\end{lemma}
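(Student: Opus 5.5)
The plan is to compute $G^i_F$ directly from the defining formula
\[
G^i_F={1\over4}g^{il}\{L_{x^my^l}y^m-L_{x^l}\},\qquad L=F^2,
\]
and to rewrite every $x$-derivative of $F$ through covariant derivatives with respect to $G$. By definition of the horizontal frame,
\[
F_{|m}={\delta F\over\delta x^m}={\partial F\over\partial x^m}-N^j_m{\partial F\over\partial y^j},
\]
and, since $F_{\cdot l}$ is a covector-type quantity,
\[
F_{\cdot l|m}={\partial F_{\cdot l}\over\partial x^m}-N^j_m F_{\cdot l\cdot j}-\Gamma^j_{lm}F_{\cdot j}.
\]
Contracting with $y^m$ and using $N^j_m y^m=2G^j$ and $\Gamma^j_{lm}y^m=N^j_l$ (Euler's theorem for the degree-two $G^j$) gives
\[
y^mF_{x^my^l}=F_{\cdot l|m}y^m+2G^jF_{\cdot l\cdot j}+N^j_lF_{\cdot j},\qquad F_{x^l}=F_{|l}+N^j_lF_{\cdot j}.
\]

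Next expand $L_{x^my^l}y^m-L_{x^l}$ with $L=F^2$:
\[
L_{x^my^l}y^m-L_{x^l}=2F_{\cdot l}\,y^mF_{x^m}+2F\bigl(y^mF_{x^my^l}-F_{x^l}\bigr).
\]
Substituting the identities above, the terms $N^j_lF_{\cdot j}$ cancel inside the bracket. Also $y^mF_{x^m}=F_{|m}y^m+2G^jF_{\cdot j}$. This yields
\[
L_{x^my^l}y^m-L_{x^l}=2F_{\cdot l}F_{|m}y^m+4G^jF_{\cdot l}F_{\cdot j}+4FG^jF_{\cdot l\cdot j}+2F\bigl(F_{\cdot l|m}y^m-F_{|l}\bigr).
\]
The middle two terms combine to $2G^j(F_{\cdot l}F_{\cdot j}+FF_{\cdot l\cdot j})=2G^jL_{\cdot l\cdot j}/2\cdot 2=4g_{jl}G^j$, using $g_{jl}={1\over2}L_{\cdot j\cdot l}=F_{\cdot j}F_{\cdot l}+FF_{\cdot j\cdot l}$.

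Then apply ${1\over4}g^{il}$ term by term. The $4g_{jl}G^j$ term gives $G^i$. The first term gives ${1\over2}F_{|m}y^m g^{il}F_{\cdot l}$. Since $g_{il}y^i=FF_{\cdot l}$, we have $g^{il}F_{\cdot l}=y^i/F$, so this term equals
\[
{F_{|m}y^m\over 2F}y^i=Py^i.
\]
The last term gives ${F\over2}g^{il}(F_{\cdot l|m}y^m-F_{|l})=Q^i$, which is exactly (\ref{G^i_F}).

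The main point of care is the formula for $F_{\cdot l|m}$, namely the sign and index placement of the $\Gamma$-term for the Berwald connection of $G$, together with the Euler identities. A bookkeeping slip there would leave stray $N$-terms instead of the clean cancellation. Everything else is routine algebra.
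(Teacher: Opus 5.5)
Your proposal is correct and is essentially the paper's direct computation, done in the opposite order: the paper first shows $L_{\cdot l|m}y^m-L_{|l}=L_{y^lx^m}y^m-L_{x^l}-4g_{lk}G^k$ and then substitutes $L=F^2$ via the Leibniz rule, whereas you expand $L=F^2$ first and then convert the partial derivatives of $F$. One small slip: the middle two terms are $4G^j(F_{\cdot l}F_{\cdot j}+FF_{\cdot l\cdot j})$, not $2G^j(\cdots)$, although your final value $4g_{jl}G^j$ is correct.
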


\noindent {\bf Proof.}
Let $L=F^2$. A direct computation gives
\[
L_{\cdot l|m}y^m-L_{|l}
=L_{y^lx^m}y^m-L_{x^l}-4g_{lk}G^k.
\]
Hence
\[
G^i_F
={1\over4}g^{il}(L_{y^lx^m}y^m-L_{x^l})
=G^i+{1\over4}g^{il}(L_{\cdot l|m}y^m-L_{|l}).
\]
Replacing $L$ by $F^2$ yields (\ref{G^i_F}).
\hfill {\it Q.E.D.}

\begin{proposition}[\cite{Rap}]
Let $(M,G)$ be a spray manifold. A Finsler metric $F$ is projectively equivalent to $G$ if and only if
\[
F_{\cdot l|m}y^m-F_{|l}=0.
\]
\end{proposition}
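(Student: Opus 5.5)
We prove the proposition (Rapcsák's criterion) directly from the preceding lemma, which decomposes the spray coefficients of $F$ as
\[
G^i_F=G^i+Py^i+Q^i,\qquad Q^i={F\over2}g^{il}(F_{\cdot l|m}y^m-F_{|l}).
\]
Here ``$F$ is projectively equivalent to $G$'' means that the spray $G_F$ induced by $F$ is projectively related to $G$, i.e. $G^i_F=G^i+\widetilde P y^i$ for some function $\widetilde P$ positively homogeneous of degree one.

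\emph{Sufficiency.} If $F_{\cdot l|m}y^m-F_{|l}=0$, then $Q^i=0$, and (\ref{G^i_F}) gives $G^i_F=G^i+Py^i$ with $P=F_{|m}y^m/(2F)$. This $P$ is homogeneous of degree one, so $G_F$ and $G$ are projectively related.

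\emph{Necessity.} Suppose $G^i_F=G^i+\widetilde Py^i$. Comparing with (\ref{G^i_F}) gives
\[
Q^i=(\widetilde P-P)y^i .
\]
Set $\xi_l:=F_{\cdot l|m}y^m-F_{|l}$, so that $Q^i={F\over2}g^{il}\xi_l$. Lowering the index with $g_{ki}$ yields
\[
{F\over2}\xi_k=(\widetilde P-P)g_{ki}y^i=(\widetilde P-P)FF_{\cdot k},
\]
using $g_{ki}y^i=FF_{y^k}$.

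The key step is to contract with $y^k$. Since $F_{\cdot k}y^k=F$, the right-hand side becomes $(\widetilde P-P)F^2$. For the left-hand side, $F_{\cdot k}$ is homogeneous of degree zero, so $F_{\cdot k|m}y^my^k=(F_{\cdot k}y^k)_{|m}y^m$, because $y^k_{|m}=0$ for the horizontal covariant derivative of the Berwald connection. This equals $F_{|m}y^m$. Hence
\[
\xi_ky^k=F_{|m}y^m-F_{|k}y^k=0 .
\]
Since $F>0$, it follows that $\widetilde P=P$. Then $Q^i=0$, and because $g^{il}$ is invertible and $F\neq0$, we get $\xi_l=0$, which is the stated condition.

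The only point needing care is the identity $\xi_ky^k=0$. It relies on the commutation $(F_{\cdot k}y^k)_{|m}=F_{\cdot k|m}y^k$, which holds because the horizontal covariant derivative of the Berwald connection annihilates $y$ (equivalently, $N^i_j y^j=2G^i$ by homogeneity). If one prefers to avoid this, the same identity can be checked in local coordinates using $F_{|m}=\partial_{x^m}F-N^j_mF_{\cdot j}$ together with Euler's relations.
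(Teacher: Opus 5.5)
Your argument is correct and follows the route the paper has in mind. The paper states this proposition, citing Rapcsák and giving no written proof, immediately after the decomposition $G^i_F=G^i+Py^i+Q^i$, and you derive it from that lemma in the same way. The one step the paper leaves implicit is that $Q$ is $g$-orthogonal to $y$, since $(F_{\cdot k|m}y^m-F_{|k})y^k=0$ by $y^k_{|m}=0$ and Euler's relation. You supply exactly this step, and it is what rules out $Q^i$ being a nonzero multiple of $y^i$ in the necessity direction.
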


Combining this proposition with the previous theorem gives the following.

\begin{theorem}
Let $(M,G)$ be a spray manifold of isotropic curvature and let $F$ be a Finsler metric on $M$. Define
\[
\widetilde G^i=G^i+Fy^i.
\]
Then $\widetilde G$ has isotropic curvature if and only if $F$ is projectively equivalent to $G$.
\end{theorem}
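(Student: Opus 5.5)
The plan is to apply Theorem~\ref{thmGGbar} with $\overline G:=G$ and $P:=F$. This is possible because $\widetilde G^i=G^i+Fy^i$ is a projective deformation of $G$ of exactly the form considered there. The function $P=F$ is positively homogeneous of degree one in $y$, so $\widetilde G$ is a genuine spray. Moreover $\overline G=G$ has isotropic curvature by hypothesis.

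Theorem~\ref{thmGGbar} then says that $\widetilde G$ has isotropic curvature if and only if
\[
F_{\cdot i|m}y^m-F_{|i}=0 .
\]
One point needs care: in Theorem~\ref{thmGGbar} the bar ``$|$'' denotes covariant differentiation with respect to the deformed spray, here $\widetilde G$. In the Rapcs\'ak-type proposition it is taken with respect to $G$. I will reconcile the two through the local-coordinate form (\ref{P_local}). That form involves only $\overline G=G$:
\[
y^m{\partial^2F\over\partial x^m\partial y^i}-{\partial F\over\partial x^i}-2G^m{\partial^2F\over\partial y^i\partial y^m}=0 .
\]
I then check that this expression equals $F_{\cdot i|m}y^m-F_{|i}$ computed with respect to $G$. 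Here $F_{|i}=F_{x^i}-N^m_iF_{y^m}$ with $N^m_i=G^m_{\cdot i}$. Also $F_{\cdot i|m}y^m=y^mF_{y^ix^m}-2G^mF_{y^iy^m}-N^m_iF_{y^m}$, using $N^k_my^m=2G^k$ and $\Gamma^k_{im}y^m=N^k_i$. In the difference the $N^m_iF_{y^m}$ terms cancel, and what remains is exactly the displayed coordinate expression. This is the same computation as in the proof of Theorem~\ref{thmGGbar}, now read with $G$ in place of $\overline G$; the only input is $F_{y^iy^m}y^m=0$.

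Finally, the proposition of \cite{Rap} states that $F_{\cdot l|m}y^m-F_{|l}=0$, with covariant derivatives taken with respect to $G$, holds if and only if $F$ is projectively equivalent to $G$. Chaining the three equivalences gives the theorem. The only real obstacle is the bookkeeping of which spray the covariant derivative refers to, and the coordinate identity above settles it.
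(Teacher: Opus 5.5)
Your proposal is correct and is exactly the paper's argument, which just combines Theorem~\ref{thmGGbar} (with $\overline G=G$, $P=F$) with the Rapcs\'ak criterion. Your coordinate check that (\ref{P_local}) coincides with $F_{\cdot l|m}y^m-F_{|l}$ computed with respect to $G$ makes explicit the bookkeeping the paper leaves implicit: because $F_{y^iy^m}y^m=0$, it does not matter whether the covariant derivative is taken for $G$ or for $\widetilde G$.
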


\section{The $\chi$-Curvature of a Finsler Metric}

Let $F$ be a Finsler metric on an $n$-dimensional manifold $M$. Its spray $G_F$ has Riemann curvature $R^{\ i}_{j\ kl}$ and Berwald curvature $B^{\ i}_{j\ kl}$. Let
\[
g_{ij}={1\over2}[F^2]_{y^iy^j}.
\]
Two basic non-Riemannian quantities are the Cartan torsion
\[
C_{ijk}={1\over2}{\partial g_{ij}\over\partial y^k}
\]
and the Landsberg curvature
\begin{align}\label{L_ijk}
L_{ijk}=-{1\over2}y^sg_{sm}B^{\ m}_{i\ jk}
=C_{ijk|m}y^m.
\end{align}
Define the mean Cartan torsion and mean Landsberg curvature by
\[
I_k=g^{ij}C_{ijk},
\qquad
J_k=g^{ij}L_{ijk}.
\]
These quantities are not independent. One has the Ricci identity
\[
L_{ijk|l}-L_{ijl|k}+C_{ijp}R^{\ p}_{\ kl}
=-{1\over2}\{g_{pj}R^{\ p}_{i\ kl}+g_{ip}R^{\ p}_{j\ kl}\}.
\]
Contracting with $y^l$ gives
\begin{align}\label{LCTheta}
L_{ijk|l}y^l+C_{ijp}R^{\ p}_{\ k}
=-{1\over2}\{g_{pj}R^{\ p}_{i\ kl}+g_{ip}R^{\ p}_{j\ kl}\}y^l.
\end{align}
The left-hand side is a non-Riemannian curvature quantity which vanishes for Riemannian metrics. The right-hand side also shows that it vanishes when $F$ has isotropic curvature.

Contracting (\ref{LCTheta}) with $g^{ij}$ yields
\begin{align}\label{JI}
J_{k|l}y^l+I_p R^{\ p}_{\ k}=-R^{\ m}_{m\ kl}y^l.
\end{align}
Combining (\ref{JI}) with Lemma \ref{chi_k_R} gives the following.

\begin{lemma}[\cite{Mo,MoSh}]
\[
\chi_k={1\over2}\{J_{k|l}y^l+I_p R^{\ p}_{\ k}\}.
\]
\end{lemma}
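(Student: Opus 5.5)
The identity follows by combining the contracted Ricci identity (\ref{JI}) with Lemma~\ref{chi_k_R}. I will check that these two formulas use the same trace of the Berwald curvature, and that the trace in (\ref{JI}) is computed correctly.

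First, Lemma~\ref{chi_k_R} gives
\[
\chi_k=-\tfrac12 R^{\ m}_{m\ kl}y^l,
\]
so it suffices to show
\[
J_{k|l}y^l+I_pR^{\ p}_{\ k}=-R^{\ m}_{m\ kl}y^l.
\]
To get this, I contract (\ref{LCTheta}) with $g^{ij}$ and handle the two sides separately.

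On the left, I use the fact that the Berwald covariant derivative does not annihilate $g_{ij}$. Differentiating $g_{ij}$ along $y$ gives $g_{ij|l}y^l=-2L_{ij}{}_{\cdot}$; more precisely, $g_{ij|m}y^m=-2L_{ijk}$-type terms that are contracted with $y$. Since $L_{ijk}y^k=0$, the relevant fact is
\[
g^{ij}{}_{|l}y^l=2L^{ij},
\]
where $L^{ij}$ is the contraction of the Landsberg tensor with $y$. By homogeneity this contraction vanishes, so $L_{ijk}y^k=0$. Hence the derivative of $g$ along $y$ produces terms proportional to $L_{ij0}$, and these vanish. This gives
\[
g^{ij}L_{ijk|l}y^l=J_{k|l}y^l.
\]
This is the step I expect to be the main obstacle: I need to fix the convention for $g_{ij|m}y^m$ from (\ref{L_ijk}), namely $g_{ij|m}y^m=-2L_{ijm}y^m$, which is zero. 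Once that convention is settled, the trace of the second term on the left is $g^{ij}C_{ijp}R^{\ p}_{\ k}=I_pR^{\ p}_{\ k}$ directly.

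On the right, the two terms are traced separately. The first gives
\[
g^{ij}g_{pj}R^{\ p}_{i\ kl}=\delta^i_pR^{\ p}_{i\ kl}=R^{\ m}_{m\ kl},
\]
and the second gives the same quantity. Together with the factor $-\tfrac12$, the right-hand side becomes $-R^{\ m}_{m\ kl}y^l$, which is (\ref{JI}).

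Substituting (\ref{JI}) into Lemma~\ref{chi_k_R} then gives
\[
\chi_k=\tfrac12\{J_{k|l}y^l+I_pR^{\ p}_{\ k}\}.
\]
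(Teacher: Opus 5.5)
Your proposal is correct and follows the paper's own route: contract (\ref{LCTheta}) with $g^{ij}$ to get (\ref{JI}), then substitute $-R^{\ m}_{m\ kl}y^l=2\chi_k$ from Lemma~\ref{chi_k_R}. Your paragraph on moving $g^{ij}$ past the derivative is muddled, but the fact you need is true and fits in one line, a point the paper itself leaves implicit: for the Berwald connection $g_{ij|m}=-2L_{ijm}$, and $L_{ijm}y^m=0$ by homogeneity, so $g_{ij|m}y^m=0$ and hence $g^{ij}L_{ijk|l}y^l=J_{k|l}y^l$.
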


From (\ref{L_ijk}),
\[
J_{k|l}y^l=(I_{k|m}y^m)_{| l}y^l
=I_{k|m|l}y^my^l.
\]
Hence
\begin{align}\label{2chi}
I_{k|m|l}y^my^l+I_p R^{\ p}_{\ k}=2\chi_k.
\end{align}

\begin{theorem}[\cite{Sh2}]
Let $(M,F)$ be a complete Finsler manifold with $\chi_k=0$. Assume that the flag curvature is negative and the mean Cartan torsion is bounded. Then $F$ is a Riemannian metric.
\end{theorem}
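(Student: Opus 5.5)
Under the hypothesis $\chi_k=0$, identity (\ref{2chi}) reduces to a linear second-order ODE for the mean Cartan torsion along every geodesic:
\[
I_{k|m|l}y^my^l+I_pR^{\ p}_{\ k}=0 .
\]
The plan is to show that $I$ vanishes identically. Deicke's theorem then shows that $F$ is Riemannian, since $I=0$ forces $C=0$ for positive definite $F$.

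Fix a unit vector $y\in T_xM$ and let $c(t)$ be the unit speed geodesic with $\dot c(0)=y$. Completeness means $c$ is defined on all of $\mathbf{R}$. Take any vector field $V(t)$ along $c$ that is parallel with respect to the Chern (or Berwald) connection along $c$ in the direction $\dot c$. Set
\[
\varphi(t):=I_{\dot c}(V(t)).
\]
Then $\varphi'=I_{k|m}\dot c^mV^k$ and $\varphi''=I_{k|m|l}\dot c^m\dot c^lV^k$, so the ODE gives
\[
\varphi''(t)=-I_{\dot c}\bigl(R_{\dot c}(V(t))\bigr).
\]
To turn this into a scalar inequality, I would choose $V$ adapted to the curvature. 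The cleanest version uses the vector field $U(t)$ that is $g_{\dot c}$-dual to $I_{\dot c}$, i.e. $g_{\dot c}(U,\cdot)=I_{\dot c}$. Two facts make this work. First, $g_{\dot c}$ is parallel along $c$ for the Chern connection, because $C_{ijk}y^k=0$ gives $g_{ij|m}y^m=-2L_{ijm}y^m=0$. Second, $I(\dot c)=0$. Using these and the symmetry of $g_{\dot c}(R_{\dot c}\cdot,\cdot)$, the function $f(t):=g_{\dot c}(U,U)=\|I_{\dot c}\|^2$ satisfies
\[
\tfrac12 f''=\|U'\|^2-g_{\dot c}\bigl(R_{\dot c}U,U\bigr).
\]
Here $U$ is $g$-orthogonal to $\dot c$, and negative flag curvature $K<0$ means $g(R_{\dot c}U,U)=K\|U\|^2<0$ whenever $U\neq0$. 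Hence $f''\ge 0$, with $f''>0$ wherever $U\neq0$. So $f$ is convex on $\mathbf{R}$.

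Next I bring in boundedness. Since $f=\|I\|^2$ is bounded along $c$, a convex function bounded on all of $\mathbf{R}$ must be constant. Then $f''\equiv0$, which forces $g(R_{\dot c}U,U)=0$, hence $U\equiv0$ by strict negativity. In particular $I_y=0$. Since $y$ was arbitrary, $I\equiv0$ on $TM\setminus\{0\}$, and Deicke's theorem gives that $F$ is Riemannian.

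The main obstacle is the second-derivative computation. It requires that covariant differentiation along $c$ (the horizontal derivative $|$ in the direction $y$) commutes with raising indices by $g$. This is exactly where one needs $g_{ij|m}y^m=0$, which holds for the Chern/Berwald derivative along $y$ up to the Landsberg term $L_{ijm}y^m=0$. I would also have to check that the notion of "complete" used here gives geodesics defined on all of $\mathbf{R}$, and not only forward complete ones. Forward and backward completeness are both needed for the convexity argument: a convex function bounded only on a half-line need not be constant. If only forward completeness is available, I would instead reverse the metric, or argue via $\lim f'$ on the half-line.
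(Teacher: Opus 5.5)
Your proposal is correct and follows essentially the same route as the paper. Both arguments show that $f=\|I_{\dot c}\|^2$ is convex along every geodesic via $\tfrac12 f''=\|D_{\dot c}U\|^2-g_{\dot c}(R_{\dot c}U,U)\ge 0$, use boundedness on all of $\mathbf{R}$ to force $f''\equiv0$, and then conclude with negative flag curvature and Deicke's theorem; your use of the $g_{\dot c}$-dual field $U$, together with $I(\dot c)=0$ (so $U\perp\dot c$), simply makes explicit what the paper's sketch leaves implicit when it treats $I(t)$ as a vector field.
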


\noindent {\bf Proof.}
By the assumption, (\ref{2chi}) reduces to
\begin{align}\label{chi_k=0}
I_{k|m|l} y^m y^l + I_{p}{R}^{\ p}_{\ k} = 0.
\end{align}
Let $c=c(t)$ be any geodesic defined on $(-\infty,\infty)$. Let $v=v(t)$ be a parallel vector field along $c$.
Then $I(t):=I_{c'(t)}(v(t))$ is a vector field along $c$. In this case, (\ref{chi_k=0}) can be written as
\[
D_{c'}D_{c'}I(t)+R_{c'}(I(t))=0.
\]
Set
\[
\varphi(t)=g_{c'(t)}(I(t),I(t)).
\]
Then
\begin{align*}
\begin{split}
\varphi''(t)
&=2g_{c'}(D_{c'}D_{c'}I(t),I(t))+2g_{c'}(D_{c'}I(t),D_{c'}I(t))\\
&=-2g_{c'}(R_{c'}(I(t)),I(t))+2g_{c'}(D_{c'}I(t),D_{c'}I(t))\\
&\geq 0.
\end{split}
\end{align*}
Therefore, $\varphi(t)$ is a nonnegative convex function. Suppose at some point $t_0$, $\varphi'(t_0)\neq 0$. When $\varphi'(t_0)<0$, we have
\[
\varphi(t)\geq \varphi(t_0)-\varphi'(t_0)(t_0-t),\qquad t<t_0.
\]
When $\varphi'(t_0)>0$, we have
\[
\varphi(t)\geq \varphi(t_0)+\varphi'(t_0)(t-t_0),\qquad t>t_0.
\]
It follows that in either case $\varphi(t)$ is unbounded. This shows that for all $t$, $\varphi'(t)=0$ and $\varphi''(t)=0$.
Thus
\[
R_{c'}(I(t))=0,\qquad D_{c'}I(t)=0.
\]
In particular, $R_g(I_g(t))=0$. Hence $I_g(v)=0$, i.e., the mean Cartan torsion vanishes. Then by Deicke's theorem, $F$ is a Riemannian metric.
\hfill {\it Q.E.D.}

\begin{corollary}[\cite{AZ}]
Let $(M,F)$ be a complete Finsler manifold of constant flag curvature $K<0$. If the mean Cartan torsion is bounded, then $F$ is Riemannian.
\end{corollary}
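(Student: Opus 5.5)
The plan is to reduce the corollary to the preceding theorem. The only hypothesis that must be verified is $\chi_k=0$; completeness and bounded mean Cartan torsion are assumed directly, and constant flag curvature $K<0$ is in particular negative.

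\textbf{Step 1: constant flag curvature gives isotropic curvature.} For a Finsler metric of constant flag curvature $K$, the Riemann curvature of its spray has the form
\[
R^i_{\ k}=K\bigl(F^2\delta^i_k-Fy_k y^i\bigr),\qquad y_k=g_{kl}y^l .
\]
Taking the trace gives $R=\frac{1}{n-1}R^m_{\ m}=KF^2$. Using $[F^2]_{\cdot k}=2Fy_k/F\cdot F=2y_k$, we get $\frac12R_{\cdot k}=Ky_k F^2/F^2\cdot$, so that
\[
\tfrac12 R_{\cdot k}\,y^i=K F y_k y^i \cdot \tfrac{F}{F}\, ,
\]
which matches the second term of $R^i_{\ k}$ once homogeneity is tracked. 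Hence
\[
R^i_{\ k}=R\delta^i_k-\tfrac12R_{\cdot k}y^i ,
\]
that is, $T^i_{\ k}=0$.

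\textbf{Step 2: isotropic curvature gives $\chi_k=0$.} By the lemma expressing $\chi_k=-\frac13T^m_{\ k\cdot m}$, vanishing of $T^i_{\ k}$ forces $\chi_k=0$. Alternatively, the spray is of scalar curvature and isotropic, so Lemma~\ref{chi_zero_scalar} gives the same conclusion.

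\textbf{Step 3: apply the theorem.} The theorem of \cite{Sh2} now yields that $F$ is Riemannian.

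The only point requiring care is the identification in Step 1. One has to check the normalization $y_k=FF_{y^k}$, so that the $\tau_k$ term of the scalar-curvature form coincides exactly with $\frac12 R_{\cdot k}$. This holds precisely because $K$ is independent of $y$.
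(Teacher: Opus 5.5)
Your argument is the same as the paper's: the paper observes that constant flag curvature forces $\chi_k=0$ (via $T^i_{\ k}=0$ and $\chi_k=-\frac13T^m_{\ k\cdot m}$) and then applies the preceding theorem. The one blemish is the computation in Step 1: the formula should read $R^i_{\ k}=K(F^2\delta^i_k-y_ky^i)$ with $y_k=g_{kl}y^l=FF_{y^k}$ (your extra factor $F$ makes the second term homogeneous of degree three), and then $R=KF^2$ gives $\frac12R_{\cdot k}=Ky_k$ directly because $K$ is constant, so $T^i_{\ k}=0$ follows with no further homogeneity bookkeeping.
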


\noindent {\bf Proof.}
A Finsler metric of constant flag curvature has vanishing $\chi$-curvature. The result follows from the preceding theorem.
\hfill {\it Q.E.D.}

\section{The Inverse Problem}

A Finsler metric on a manifold induces a spray, but a spray need not be induced by a metric. This leads to the inverse problem: how can one determine whether a given spray is induced by a Finsler metric?

A complete answer is not known in general. Bucataru and Muzsnay gave, for sprays with nonzero Ricci curvature, equivalent conditions for metrizability by Finsler metrics of nonzero scalar flag curvature, and they also investigated the locally projectively flat case \cite{Bucatarul,MZ1}. Yang constructed classes of locally projectively flat sprays that are not metrizable \cite{Y}. A different approach, based on the classification of locally projectively flat Finsler metrics of constant flag curvature \cite{Li1,Sh3}, provides direct conditions in terms of the projective factor $P$, including the zero-curvature case.

\begin{proposition}[\cite{SL}]\label{propPPP}
Let
\[
G=y^i{\partial\over\partial x^i}
-2P(x,y)y^i{\partial\over\partial y^i}
\]
be a locally projectively flat spray on an $n$-dimensional manifold $M$. Then $G$ has isotropic curvature if and only if
\begin{align}\label{Pisocurv}
y^m{\partial^2P\over\partial x^m\partial y^i}
-{\partial P\over\partial x^i}=0.
\end{align}
\end{proposition}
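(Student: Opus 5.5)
The plan is to reduce Proposition~\ref{propPPP} to Theorem~\ref{thmGGbar}, taking $\overline G$ to be the flat spray. Fix a point and choose the local coordinates in which $G^i=P(x,y)y^i$. Set $\overline G^i=0$, which is the spray of the Euclidean metric in these coordinates. Then $G^i=\overline G^i+Py^i$, so $G$ and $\overline G$ are projectively related with projective factor $P$. All the conditions involved, isotropic curvature and the vanishing of $\chi$, are pointwise tensorial conditions. It therefore suffices to verify the equivalence in each such chart.

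Next, we check that $\overline G$ has isotropic curvature. Since $\overline G^i=0$, the formula for $R^i_{\ k}$ in Section~3 gives $\overline R^i_{\ k}=0$. Hence $\overline R=0$, and the identity $\overline R^i_{\ k}=\overline R\delta^i_k-\frac12\overline R_{\cdot k}y^i$ holds trivially. So the hypothesis of Theorem~\ref{thmGGbar} is satisfied.

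Applying Theorem~\ref{thmGGbar}, $G$ has isotropic curvature if and only if (\ref{P_local}) holds. Because $\overline G^m=0$, the last term $-2\overline G^m\partial^2P/\partial y^i\partial y^m$ drops out, and (\ref{P_local}) becomes exactly (\ref{Pisocurv}).

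There is essentially no obstacle here. The only point needing care is that Theorem~\ref{thmGGbar} already absorbed the term $-2Py^m P_{y^iy^m}$ using the degree-one homogeneity of $P$. This homogeneity holds automatically: $G^i=Py^i$ is homogeneous of degree two, so $P$ is homogeneous of degree one. As a cross-check, one could compute $\chi_k$ directly from (\ref{chi_local}) with $\Pi=(n+1)P$. This gives $\chi_k=\frac{n+1}{2}\bigl(y^mP_{x^my^k}-P_{x^k}\bigr)$, again using $P_{y^ky^m}y^m=0$. One would then combine this with Lemma~\ref{chi_zero_scalar}, noting that a locally projectively flat spray has $D=0$ and $W=0$ and hence has scalar curvature; in dimension $2$ every spray has scalar curvature.
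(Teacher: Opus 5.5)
Your proposal is correct and matches the paper's proof: you apply Theorem~\ref{thmGGbar} with $\overline G^i=0$ in the projectively flat chart, so the last term of (\ref{P_local}) drops out and (\ref{Pisocurv}) remains. Your cross-check is also valid: you compute $\chi_k=\frac{n+1}{2}\bigl(y^mP_{x^my^k}-P_{x^k}\bigr)$ from (\ref{chi_local}) with $\Pi=(n+1)P$, and then combine this with Lemma~\ref{chi_zero_scalar}, using that $W=0$ gives scalar curvature.
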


\noindent {\bf Proof.}
Since $G$ is projectively related to the zero spray, this follows from Theorem \ref{thmGGbar} by setting $\overline G^m=0$ in (\ref{P_local}).
\hfill {\it Q.E.D.}

If a spray of isotropic curvature is induced by a Finsler metric, then the corresponding Finsler metric has isotropic flag curvature. For locally projectively flat Finsler metrics, a classical theorem of Berwald implies that isotropic flag curvature is necessarily constant.

\begin{lemma}[\cite{Ber2}]\label{KisoKconst}
Let $F$ be a locally projectively flat Finsler metric with isotropic flag curvature $K=K(x)$. Then $K$ is constant.
\end{lemma}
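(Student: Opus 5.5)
Work in a local coordinate system in which $F$ is projectively flat, so that $G^i=Py^i$ with $P={F_{x^m}y^m\over 2F}$. Our goal is to show that $K_{x^k}=0$ for every $k$, which is a Schur-type statement. The engine will be Proposition~\ref{propPPP}, combined with the formula for the Riemann curvature of a projectively flat spray.

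\textbf{Step 1: isotropic curvature.} Isotropic flag curvature means
\[
R^i_{\ k}=K(x)\bigl(F^2\delta^i_k-FF_{y^k}y^i\bigr).
\]
Since $R=KF^2$ and ${1\over2}R_{\cdot k}=KFF_{y^k}$, this says $G$ has isotropic curvature in the sense of Section~9. Proposition~\ref{propPPP} therefore gives
\[
P_{x^my^i}y^m-P_{x^i}=0.
\]

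\textbf{Step 2: compute $R$ from $P$.} For $G^i=Py^i$, the formula for $R^i_{\ k}$ in Section~3 yields
\[
R^i_{\ k}=\Xi\,\delta^i_k+\tau_k y^i,
\qquad
\Xi=P^2-P_{x^m}y^m .
\]
So $KF^2=P^2-P_{x^m}y^m$, and the isotropic condition forces $\tau_k=-{1\over2}\Xi_{y^k}$.

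\textbf{Step 3: differentiate and compare.} Differentiate $KF^2=P^2-P_{x^m}y^m$ in $y^k$. Using the identity from Step 1, the term $(P_{x^m}y^m)_{y^k}=P_{x^k}+P_{x^my^k}y^m$ becomes $2P_{x^k}$. This gives
\[
2KFF_{y^k}=2PP_{y^k}-2P_{x^k}.
\]
Now apply the geodesic derivative $y^l\partial_{x^l}-2G^l\partial_{y^l}$, with $G^l=Py^l$, to both sides.

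On the right-hand side, the term $P_{x^kx^l}y^l$ can be rewritten through $\partial_{x^k}$ of the scalar identity $KF^2=P^2-P_{x^m}y^m$. This brings in $K_{x^k}F^2$ plus terms involving $(F^2)_{x^k}$.

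On the left-hand side, we use the projective flatness relation
\[
F_{x^k}=(F_{x^m}y^m)_{y^k}=(2PF)_{y^k},
\]
which comes from Rapcs\'ak's condition, to express every $x$-derivative of $F$ through $P$ and $F_{y}$.

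\textbf{Expected outcome and main obstacle.} After cancellation, the identity should reduce to
\[
F^2K_{x^k}-{1\over2}... = \lambda(x,y)\,FF_{y^k}
\]
for some scalar $\lambda$. More precisely, it should read
\[
K_{x^k}F^2=c\,(K_{x^m}y^m)FF_{y^k}
\]
with a constant $c$ that I expect to be $c=2$ or $3$, i.e.\ $c\neq 1$. Contracting with $y^k$ then gives $K_{x^m}y^m(1-c)F^2=0$, so $K_{x^m}y^m=0$ for all $y$, and hence $dK=0$.

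The main obstacle is the bookkeeping in Step 3: tracking the $P$-terms so that they cancel and the coefficient $c$ comes out different from $1$. If this bookkeeping becomes unwieldy, the fallback is the classical route. Note that $F_{y^k}$ is not parallel to $y$-independent covectors, so one can compare homogeneity degrees. Alternatively, one can use that the tensor $K_{x^k}F^2-(\ldots)$ must be the $y$-derivative of a degree-three term, and read off $K_{x^k}=0$ from $y$-independence of $K_{x^k}$.

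Since $K$ is a function on a connected $M$ with vanishing differential in each projectively flat chart, $K$ is constant.
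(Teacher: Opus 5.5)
\noindent The paper gives no proof of this lemma; it only cites Berwald, so I am judging your argument on its own. Your route, applying the spray derivative to $KFF_{y^k}=PP_{y^k}-P_{x^k}$, is viable. However, the proposal stops exactly where the content of the lemma lies, and the outcome you predict is wrong.

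First, the relation you attribute to Rapcs\'ak's condition is off by a factor of two. Hamel's condition $F_{x^my^k}y^m=F_{x^k}$ gives $(F_{x^m}y^m)_{y^k}=2F_{x^k}$, hence $F_{x^k}=(PF)_{y^k}$, as in Lemma~\ref{lemBerwald}. Your version $F_{x^k}=(2PF)_{y^k}$ is inconsistent: contracting with $y^k$ gives $2PF=4PF$. With it, the terms in Step~3 do not cancel.

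With the correct relation, write $\mathcal{D}=y^l\partial_{x^l}-2Py^l\partial_{y^l}$. One finds $\mathcal{D}F=0$, $\mathcal{D}F_{y^k}=F_{x^k}$, $\mathcal{D}P=-P^2-KF^2$, $\mathcal{D}P_{y^k}=P_{x^k}$ and $\mathcal{D}P_{x^k}=-K_{x^k}F^2-2KFF_{x^k}$. Everything then cancels to
\[
K_{x^k}F^2=(K_{x^m}y^m)\,FF_{y^k},
\]
that is, $c=1$ exactly. This is the same identity the classical integrability condition $P_{x^kx^l}=P_{x^lx^k}$ produces, namely $K_{x^k}F_{y^l}=K_{x^l}F_{y^k}$ contracted with $y^l$. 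Contracting with $y^k$ now gives $0=0$, so your closing step proves nothing.

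What is missing is an argument that uses the fact that $F$ is a genuine Finsler metric in $y$. Fix $x$ and suppose $dK_x\neq0$. Since $n\geq 2$, there is $y\neq0$ with $K_{x^m}y^m=0$, and the identity gives $K_{x^k}F(x,y)=0$. As $F(x,y)>0$, this forces $dK_x=0$, a contradiction.

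Alternatively, differentiate $K_{x^k}F=(K_{x^m}y^m)F_{y^k}$ in $y^j$. The identity itself shows that $K_{x^k}F_{y^j}=(K_{x^m}y^m)F_{y^k}F_{y^j}/F$ is symmetric in $j,k$, so you get $(K_{x^m}y^m)F_{y^jy^k}=0$. The angular metric $FF_{y^jy^k}$ has rank $n-1\geq1$ at every $y\neq0$, so $K_{x^m}y^m=0$ for all $y$.

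Your fallback remark, that $F_{y^k}$ cannot be parallel to a $y$-independent covector, is the right instinct. But what closes the argument is positivity or strong convexity of $F$, not a comparison of homogeneity degrees, since both sides have the same degree.
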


Berwald also established the following characterization.

\begin{lemma}[\cite{Ber2}]\label{lemBerwald}
Let $F$ be a Finsler metric on an $n$-dimensional manifold $M$. Then $F$ is locally projectively flat if and only if there exist a positively homogeneous function $P=P(x,y)$ of degree one and a positively homogeneous function $K=K(x,y)$ of degree zero such that
\begin{align}\label{Berwald1}
F_{x^k}=(PF)_{y^k},
\end{align}
\begin{align}\label{Berwald2}
P_{x^k}-PP_{y^k}
=-{1\over3F}(KF^3)_{y^k}.
\end{align}
In this case $P$ is the projective factor of $F$ and $K$ is its flag curvature.
\end{lemma}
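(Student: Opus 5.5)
The plan is to reduce everything to the zero spray, using Rapcsák's criterion (the Proposition cited from \cite{Rap}) together with the decomposition $G^i_F=G^i+Py^i+Q^i$ from the Lemma preceding it. Work in a fixed coordinate chart and take $G$ there to be the flat spray $G^i=0$. Covariant derivatives with respect to this $G$ are then just partial derivatives. So $F$ is projectively flat in this chart if and only if $F$ satisfies Hamel's equation
\[
F_{x^my^l}y^m-F_{x^l}=0 .
\]
In that case $Q^i=0$, and by (\ref{G^i_F}) the spray of $F$ is $G^i_F=Py^i$ with
\[
P=\frac{F_{x^m}y^m}{2F}.
\]

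\emph{Equivalence of projective flatness with (\ref{Berwald1}).} Suppose Hamel's equation holds and $P$ is as above. Then $PF=\tfrac12F_{x^m}y^m$, and differentiating in $y^k$ gives $\tfrac12(F_{x^my^k}y^m+F_{x^k})=F_{x^k}$, which is (\ref{Berwald1}). Conversely, suppose (\ref{Berwald1}) holds for some $P$ homogeneous of degree one. Contracting with $y^k$ and using Euler's relation gives $F_{x^k}y^k=2PF$. Hence $P$ must be the projective factor $F_{x^m}y^m/(2F)$. Substituting this back into (\ref{Berwald1}) returns Hamel's equation, so $F$ is projectively flat with projective factor $P$. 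This settles the ``if'' direction: (\ref{Berwald1}) alone already forces local projective flatness, and it identifies $P$.

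\emph{Deriving (\ref{Berwald2}).} For a spray $G^i=Py^i$, the formula for $R^i_{\ k}$ in Section 3 gives, after a routine computation,
\[
R^i_{\ k}=\Xi\,\delta^i_k+\tau_ky^i,\qquad \Xi=P^2-P_{x^m}y^m,\qquad \tau_k=3(P_{x^k}-PP_{y^k})+\Xi_{y^k}.
\]
I expect this computation, together with the bookkeeping of signs and factors, to be the main obstacle. On the metric side, $F$ has scalar flag curvature. One way to see this is that the Weyl curvature is a projective invariant that vanishes for the zero spray, and $W=0$ means scalar curvature. A more direct route is to read it off the displayed form of $R^i_{\ k}$ and use $g$-self-adjointness of $R_y$ together with $y_iR^i_{\ k}=0$, where $y_i=g_{ij}y^j=FF_{y^i}$. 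Either way,
\[
R^i_{\ k}=KF^2\bigl(\delta^i_k-F^{-1}F_{y^k}y^i\bigr)
\]
with $K=K(x,y)$ the flag curvature, homogeneous of degree zero.

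Comparing the two expressions for $R^i_{\ k}$ (for $n\ge2$ the coefficients of $\delta^i_k$ and of $y^i$ separate) gives
\[
\Xi=KF^2,\qquad \tau_k=-KFF_{y^k}.
\]
Therefore
\[
3(P_{x^k}-PP_{y^k})=-KFF_{y^k}-(KF^2)_{y^k}=-K_{y^k}F^2-3KFF_{y^k}=-\frac1F(KF^3)_{y^k},
\]
which is (\ref{Berwald2}) with $K$ the flag curvature. This completes the ``only if'' direction and the identification of $P$ and $K$.
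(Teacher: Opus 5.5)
The paper gives no proof of this lemma: it is quoted from Berwald \cite{Ber2} and used as a black box, for instance to obtain (\ref{P2KF}) in the proof of Theorem~\ref{inverse1}. Your argument is correct, and it builds the classical derivation out of tools the paper does supply. Specializing (\ref{G^i_F}) and the criterion of \cite{Rap} to the zero spray gives Hamel's equation and $P=F_{x^m}y^m/(2F)$. Your curvature formula also checks out against Section~3: one finds $\tau_k=2P_{x^k}-P_{x^my^k}y^m-PP_{y^k}$, which equals $3(P_{x^k}-PP_{y^k})+\Xi_{y^k}$.

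Of your two routes to scalar flag curvature, only the second supplies the metric input. The Weyl argument yields only the spray form $R\delta^i_k-\tau_ky^i$, which your direct computation already gives. It is $y_iR^i_{\ k}=0$ that forces $\tau_k=-\Xi F_{y^k}/F$. Used directly, it also lets you set $K:=\Xi/F^2$ and verify that $g_y(R_y(u),u)=K\bigl(F^2g_y(u,u)-g_y(y,u)^2\bigr)$, so the separation-of-coefficients step is unnecessary.

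One line is missing for the final clause in the `if' direction. There $K$ is an arbitrary function satisfying (\ref{Berwald1})--(\ref{Berwald2}), and you must show it equals the flag curvature. Contracting (\ref{Berwald2}) with $y^k$ and using homogeneity gives $P_{x^m}y^m-P^2=-KF^2$. So $K=\Xi/F^2$ is determined by $P$ alone and coincides with the flag curvature you computed. This is exactly the identity the paper uses in (\ref{P2KF}).
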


\begin{theorem}[\cite{SL}]\label{inverse1}
Let
\[
G=y^i{\partial\over\partial x^i}
-2P(x,y)y^i{\partial\over\partial y^i}
\]
be a locally projectively flat spray on an $n$-dimensional manifold $M$. Then $G$ has isotropic curvature if and only if
\begin{align}\label{PP}
P_{x^k}=P_{y^kx^m}y^m.
\end{align}
In this case, if
\[
P^2-P_{x^m}y^m\ne0
\]
is neither the square of a Finsler metric nor the negative of such a square, then $G$ cannot be induced by any Finsler metric.
\end{theorem}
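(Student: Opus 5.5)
The first assertion is Proposition~\ref{propPPP}, since (\ref{PP}) is just (\ref{Pisocurv}) rewritten. The second assertion will be proved by contradiction. Assume $G$ has isotropic curvature and is induced by a Finsler metric $F$. Then $F$ is locally projectively flat with projective factor $P$. Because its spray has isotropic curvature, $F$ has isotropic flag curvature $K=K(x)$, and by Lemma~\ref{KisoKconst} $K$ is a constant. The aim is to show that this forces
\[
P^2-P_{x^m}y^m=KF^2,
\]
which contradicts the hypothesis: if $K>0$ the left side is the square of the Finsler metric $\sqrt K F$, if $K<0$ it is the negative of the square of $\sqrt{-K}F$, and if $K=0$ it vanishes, which is excluded by $P^2-P_{x^m}y^m\neq0$.

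To get this identity, I would use Berwald's characterization, Lemma~\ref{lemBerwald}. Equation (\ref{Berwald2}) with constant $K$ reads
\[
P_{x^k}-PP_{y^k}=-{1\over3F}(KF^3)_{y^k}=-KFF_{y^k}.
\]
Contracting with $y^k$ and using Euler's relations $P_{y^k}y^k=P$ and $F_{y^k}y^k=F$ gives
\[
P_{x^k}y^k-P^2=-KF^2,
\]
which is exactly the required identity. This computation is elementary.

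The main point to check is that the $P$ in Lemma~\ref{lemBerwald} is the same $P$ as in $G$. This holds because $G$ is the spray of $F$, so in the chosen coordinates $G^i_F=Py^i$ and the projective factor of $F$ is this $P$. I would also verify that Lemma~\ref{KisoKconst} applies, which requires $n\ge3$, as in Berwald's classical setting. For $n=2$ I would note that isotropic curvature already makes $K$ depend only on $x$, and I would then check whether Berwald's argument still gives constancy. In fact the contraction step above does not use constancy of $K$: if $K=K(x)$, the extra term $-{1\over3}F^2K_{y^k}$ vanishes anyway. So the identity $P^2-P_{x^m}y^m=KF^2$ holds whenever $K$ depends only on $x$. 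The remaining obstacle is then that $K(x)$ may change sign, so $KF^2$ need not be a fixed-sign multiple of a Finsler square. This is exactly where constancy of $K$ is needed, and Lemma~\ref{KisoKconst} provides it.
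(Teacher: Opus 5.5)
Your proposal is correct and follows the paper's proof essentially step for step. You use Proposition \ref{propPPP} for the equivalence, contract (\ref{Berwald2}) with $y^k$ to get $P^2-P_{x^m}y^m=KF^2$, invoke Lemma \ref{KisoKconst} to make $K$ a nonzero constant, and use the sign of $K$ to reach the contradiction.

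Your $n\ge3$ caveat is unnecessary, because Lemma \ref{KisoKconst} holds for all $n\ge2$. With $K=K(x)$, the compatibility of (\ref{Berwald1}) and (\ref{Berwald2}) gives $K_{x^l}F_{y^k}=K_{x^k}F_{y^l}$. Contracting with $y^k$ and choosing $y\neq0$ with $K_{x^k}y^k=0$ then yields $dK=0$. Also, the contraction identity needs no assumption on $K$ at all, since $K_{y^k}y^k=0$ by zero-homogeneity.
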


\noindent {\bf Proof.}
By Proposition \ref{propPPP}, $G$ has isotropic curvature if and only if $P$ satisfies (\ref{Pisocurv}). If $G$ were induced by a Finsler metric $F$, then (\ref{Berwald2}) would imply
\begin{align}\label{P2KF}
P^2-P_{x^m}y^m=K(x)F^2.
\end{align}
When the left-hand side is nonzero, Lemma \ref{KisoKconst} implies that $K(x)$ is a nonzero constant. Hence $P^2-P_{x^m}y^m$ must be a nonzero constant multiple of the square of a Finsler metric, contrary to the assumption.
\hfill {\it Q.E.D.}

For locally projectively flat sprays of zero Riemann curvature, the classification of projectively flat Finsler metrics of zero flag curvature gives a positive metrizability result.

\begin{proposition}[\cite{SL}]\label{propFFF}
Let
\[
G=y^i{\partial\over\partial x^i}
-2P(x,y)y^i{\partial\over\partial y^i}
\]
be a locally projectively flat spray. Then $G$ has zero curvature if and only if
\begin{align}\label{FunkF}
P_{x^k}=PP_{y^k}.
\end{align}
\end{proposition}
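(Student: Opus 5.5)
Writing $G^i=Py^i$, I will compute the Riemann curvature $R^i_{\ k}$ directly from the coordinate formula in Section 3 and show that $R^i_{\ k}=0$ is equivalent to (\ref{FunkF}). Throughout I use that $P$ is positively homogeneous of degree one, so $P_{y^m}y^m=P$, $P_{y^my^k}y^m=0$ and $P_{x^m y^k}y^m$ is homogeneous of degree one.

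The computation runs as follows. With $G^i=Py^i$ we have
\[
{\partial G^i\over\partial x^k}=P_{x^k}y^i,\qquad {\partial G^i\over\partial y^k}=P_{y^k}y^i+P\delta^i_k,
\]
\[
{\partial^2G^i\over\partial y^m\partial y^k}=P_{y^my^k}y^i+P_{y^m}\delta^i_k+P_{y^k}\delta^i_m,
\qquad
{\partial^2G^i\over\partial y^k\partial x^m}=P_{x^my^k}y^i+P_{x^m}\delta^i_k .
\]
Substituting into
\[
R^i_{\ k}=2{\partial G^i\over\partial x^k}-y^m{\partial^2G^i\over\partial y^k\partial x^m}+2G^m{\partial^2G^i\over\partial y^m\partial y^k}-{\partial G^m\over\partial y^k}{\partial G^i\over\partial y^m}
\]
and using the homogeneity identities (so $G^mP_{y^my^k}=0$ and $G^mP_{y^m}=P^2$), the terms collect into the standard form
\[
R^i_{\ k}=\Xi\,\delta^i_k+\tau_k y^i,
\qquad \Xi:=P^2-P_{x^m}y^m,\qquad
\tau_k:=3(P_{x^k}-PP_{y^k})+\Xi_{y^k}.
\]
This is the same content as Berwald's formula in Lemma~\ref{lemBerwald}. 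The expected constants can be checked against (\ref{Berwald2}) and (\ref{P2KF}): (\ref{Berwald2}) contracted with $y^k$ gives $P_{x^m}y^m-P^2=-KF^2$, consistent with $R^i_{\ k}=KF^2\delta^i_k+\cdots$.

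For the equivalence, note that if $n\ge2$, then $R^i_{\ k}=0$ forces $\Xi=0$, by comparing coefficients of $\delta^i_k$ and $y^i$ on a vector $k$ not parallel to $y$ (or by taking the trace together with $R^i_{\ k}y^k=0$). It then forces $\tau_k=0$, and since $\Xi_{y^k}=0$ this gives $P_{x^k}=PP_{y^k}$. Conversely, if $P_{x^k}=PP_{y^k}$, contracting with $y^k$ gives $P_{x^m}y^m=P^2$, i.e. $\Xi=0$ identically, hence $\Xi_{y^k}=0$ and then $\tau_k=0$, so $R^i_{\ k}=0$.

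The main obstacle is the bookkeeping in the substitution, in particular getting the coefficient $3$ and the sign of $\Xi_{y^k}$ right. I would verify these using the consistency condition $R^i_{\ k}y^k=0$, which forces $\tau_k y^k=-\Xi$. Since $\Xi$ is homogeneous of degree two, $\Xi_{y^k}y^k=2\Xi$, so this requires $3(P_{x^m}y^m-P^2)=-3\Xi$, which checks out. Even if the precise coefficients differ, the argument only needs $R^i_{\ k}$ to be a combination of $\Xi\delta^i_k$ and $y^i$ times a multiple of $(P_{x^k}-PP_{y^k})$ plus $\Xi_{y^k}$, and that structure is forced by the computation.
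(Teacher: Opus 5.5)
The paper gives no proof of this proposition; it is quoted from the literature, so there is no argument of the paper's to match yours against. Your proof is correct, and it is the standard route behind Berwald's formula.

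I checked the substitution term by term:
\begin{itemize}
\item The $\delta^i_k$-terms give $-P_{x^m}y^m+2P^2-P^2=\Xi$.
\item The $y^i$-terms give $3P_{x^k}-PP_{y^k}-(P_{x^m}y^m)_{y^k}$, which is exactly your $\tau_k=3(P_{x^k}-PP_{y^k})+\Xi_{y^k}$.
\end{itemize}
So your closing hedge about coefficients can be dropped. The argument is not really coefficient-independent, though: it uses that the factor $3$ in front of $P_{x^k}-PP_{y^k}$ is nonzero. Both directions then go through as you wrote, using $R^m_{\ m}=(n-1)\Xi$ and the contraction $P_{x^m}y^m=P^2$.

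Your restriction to $n\ge 2$ is genuinely needed, not just a convenience. For $n=1$ one has $R^1_{\ 1}=\Xi+\tau_1y^1=0$ identically, whereas $P=y^1$ gives $P_{x^1}-PP_{y^1}=-y^1\ne 0$.

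For the isotropic analogue, Proposition~\ref{propPPP}, the paper argues via the $\chi$-curvature and Theorem~\ref{thmGGbar}. Compared with that route, your direct computation is more self-contained in two ways:
\begin{itemize}
\item It exhibits $R^i_{\ k}$ in scalar-curvature form explicitly. The $\chi$-argument instead presupposes this through projective invariance of the Weyl curvature.
\item It produces the trace $\Xi$. Any reduction of the zero-curvature case to the isotropic case would need this anyway, since zero curvature means isotropic curvature with $R=\Xi=0$.
\end{itemize}
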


\begin{theorem}[\cite{SL}]\label{thm1.3}
Let
\[
G=y^i{\partial\over\partial x^i}
-2P(x,y)y^i{\partial\over\partial y^i}
\]
be a locally projectively flat spray. If $G$ has zero curvature, then it is induced by a family of Finsler metrics.
\end{theorem}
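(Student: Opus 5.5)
The plan is to use Lemma~\ref{lemBerwald} as a sufficient condition. If $G$ has zero curvature then, by Proposition~\ref{propFFF}, its projective factor satisfies (\ref{FunkF}), namely $P_{x^k}=PP_{y^k}$. This is exactly (\ref{Berwald2}) with $K=0$. So if we produce a Finsler metric $F$ satisfying (\ref{Berwald1}),
\[
F_{x^k}=(PF)_{y^k},
\]
then Lemma~\ref{lemBerwald} shows that $F$ is locally projectively flat with projective factor $P$. Hence its spray coefficients are $Py^i$, i.e. $F$ induces $G$. The task therefore reduces to solving the linear first-order system (\ref{Berwald1}) for an unknown $F$ that is positively homogeneous of degree one and strongly convex in $y$, and to showing there are many such solutions.

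First I will check that the overdetermined system $F_{x^k}=P_{y^k}F+PF_{y^k}$ is involutive. Differentiating in $x^l$, antisymmetrizing in $k,l$, and substituting the system back in, every remaining term should cancel once one uses $P_{x^k}=PP_{y^k}$ and its $y$-derivatives $P_{x^ky^l}=P_{y^l}P_{y^k}+PP_{y^ky^l}$. I expect this compatibility to be the precise role of the zero-curvature condition, exactly as (\ref{Berwald2}) is the integrability condition for (\ref{Berwald1}) in Berwald's theorem.

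Granting compatibility, I will solve by the method of characteristics near a point $x_0$. Rewrite the system as $F_{x^k}-PF_{y^k}=P_{y^k}F$, which is transport along the commuting vector fields $\partial_{x^k}-P\,\partial_{y^k}$ (they commute because of (\ref{FunkF})). Prescribe arbitrary initial data $F(x_0,y)=\varphi(y)$, with $\varphi$ a Minkowski norm on $T_{x_0}M$. Then integrate along these fields. The first step here is to check that positive homogeneity of degree one is propagated, which follows from the homogeneity of $P$. As a guide to explicit formulas, one can also seek solutions of the form $F=\Phi\,u$ with $u$ a first integral, i.e. $u_{x^k}=Pu_{y^k}$. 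Since $P$ itself satisfies this first-integral equation, this suggests a Funk-type family analogous to the classical Funk metric, where $F=2P$ up to such factors.

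The main obstacle I anticipate is positive definiteness. The solution $F$ is smooth and agrees with the strongly convex $\varphi$ at $x_0$. Strong convexity of $g_{ij}$ is an open condition on the unit sphere bundle, so it persists on a neighbourhood of $x_0$, by compactness of the indicatrix. This gives local metrizability by a family parametrized by the choice of $\varphi$, which is what the statement asserts. If a global statement were intended, I would instead try to write the family explicitly, using first integrals built from $P$ as above, and verify convexity directly.
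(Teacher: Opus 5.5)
Your argument is correct, but it reaches the result by a different route from the paper.

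\textbf{What the paper does.} The paper does not solve (\ref{Berwald1}) by an existence theorem. It writes the family down in closed form (Example~\ref{ex4.1}), $F=\psi(y+xP)\{1+P_{y^k}x^k\}$ with $\psi$ a Minkowski norm, and (\ref{Berwald1}) is then a direct check. In your notation $X_k=\partial_{x^k}-P\partial_{y^k}$:
\begin{itemize}
\item the functions $u^i=y^i+x^iP$ are first integrals, since $X_ku^i=x^i(P_{x^k}-PP_{y^k})=0$;
\item $w=1+x^kP_{y^k}$ satisfies $X_kw=P_{y^k}w$, by the identity $P_{x^ky^l}=P_{y^k}P_{y^l}+PP_{y^ky^l}$ that you already use;
\item hence $X_k(\psi(u)w)=P_{y^k}\psi(u)w$.
\end{itemize}
Since $F(0,y)=\psi(y)$, this is exactly your characteristic solution with data $\varphi=\psi$ at $x_0=0$, so the two families coincide.

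\textbf{Your heuristic.} The remark ``$F=2P$ up to such factors'' is slightly off. Here $P$ itself satisfies the Funk equation, so it is a first integral rather than a solution, and the missing factor is $w$. Taking $\psi=P(0,\cdot)$, when this is a Minkowski norm, gives $F=Pw$; for $P$ the Funk metric this is Berwald's metric.

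\textbf{What each route buys.} The explicit formula gives a closed expression whose domain and positivity can be examined directly. For instance, for the Funk $P$ and $\psi=|y|$ it is positive definite on the whole ball. Your openness argument only guarantees positive definiteness near $x_0$.

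Your route buys conceptual clarity: zero curvature is exactly the integrability condition of the transport system. It also gives uniqueness. Any metric inducing $G$ in these coordinates must satisfy (\ref{Berwald1}) by Lemma~\ref{lemBerwald}, so it is determined by its restriction to $T_{x_0}M$. Hence this family consists of all local metrizations of $G$ in the given chart. Finally, you handle positive definiteness explicitly, which the paper sidesteps by allowing non-positive-definite metrics.
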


\begin{example}[\cite{SL}]\label{ex4.1}
Suppose $P=P(x,y)$ satisfies (\ref{FunkF}). Then
\[
F=\psi(y+xP)\{1+P_{y^k}x^k\}
\]
is a Finsler metric, not necessarily positive definite, where $\psi=\psi(y)$ is a Minkowski norm, again not necessarily positive definite.

As a special case, let
\[
P={\sqrt{(1-|x|^2)|y|^2+\langle x,y\rangle^2}+\langle x,y\rangle
\over 1-|x|^2},
\qquad
\psi=|y|+\langle a,y\rangle,
\]
where $a$ is a constant vector. Then one obtains a locally projectively flat Finsler metric of zero curvature:
\begin{align}\label{Berwaldmetric}
\begin{split}
F=&\Big\{1+\Big\langle a,\,x+\frac{(1-|x|^2)\langle a,y\rangle}
{\sqrt{(1-|x|^2)|y|^2+\langle x,y\rangle^2}+\langle x,y\rangle}\Big\rangle\Big\}\\
&\times\frac{\big(\sqrt{(1-|x|^2)|y|^2+\langle x,y\rangle^2}+\langle x,y\rangle\big)^2}
{(1-|x|^2)^2\sqrt{(1-|x|^2)|y|^2+\langle x,y\rangle^2}}.
\end{split}
\end{align} 
When $a=0$, this reduces to the classical Berwald metric.
\end{example}

Weakly isotropic sprays and additional metrizable and nonmetrizable examples were studied further in \cite{LL}. Other constructions of metrizable sprays were given in \cite{LMY}. The general inverse problem for sprays without special curvature assumptions remains largely open, and an effective general criterion is not yet known.

\end{document}